\renewcommand{\mod}[1]{\allowbreak \if@display \mkern 8mu \else
\mkern 5mu\fi {\operator@font mod}\,\,#1}
\newcommand{\bc}{\mathbb C}
 \newcommand{\bq}{\mathbb Q}
\newcommand{\br}{\mathbb R}
\newcommand{\bz}{\mathbb Z}
\newcommand{\bp}{\mathbb P}
\DeclareMathOperator{\Aut}{{\rm Aut}\,}
\DeclareMathOperator{\rk}{rk}
\DeclareMathOperator{\cha}{{\rm char}\,}
\newtheorem{theorem}{Theorem}
\newtheorem{definition}[theorem]{Definition}
\newtheorem{corollary}[theorem]{Corollary}
\newcommand\La{\mathcal L}
\newcommand\M{\mathcal M}
\begin{document}
\title{Elliptic fibrations on K3 surfaces}
\date{}
\author{Viacheslav V. Nikulin}
\maketitle

\begin{abstract} The present paper consists mainly of a review and 
applications of our old results related to the title. 
We discuss how many elliptic fibrations
and elliptic fibrations with infinite automorphism groups
(or Mordell--Weil groups) an algebraic K3 surface over an algebraically
closed field can have.

As examples of applications of the same ideas, we also consider 
K3 surfaces with exotic structures: with finite number of non-singular
rational curves, with finite number of Enriques involutions,
and with naturally arithmetic automorphism groups. 
\end{abstract}

\centerline{Dedicated to my old friend and colleague Slava Shokurov}

\centerline{on occasion of his 60th birthday} 

\section{Introduction}
\label{introduction}

This is mainly a review and
applications of our old results related to elliptic fibrations on K3
surfaces over algebraically closed fields. See \cite{Nik1}---\cite{Nik8}.
The most important are our papers \cite{Nik2}, \cite{Nik3}, \cite{Nik7} and
\cite{Nik8}.

This was the subject of our talk at the Oberwolfach workshop 
``Higher dimensional elliptic fibrations'' in October 2010.
Elliptic fibrations are especially interesting for Fano and Calabi--Yau
varieties. Thus, it is interesting to study these fibrations 
in the case of K3 surfaces which are 2-dimensional Calabi--Yau manifolds.

We consider algebraic K3 surfaces $X$ over arbitrary algebraically
closed fields $k$. 

In Section \ref{sec1}, we discuss basic results by Piatetsky-Shapiro
and Shafarevich \cite{PS}. In particular, we discuss,   
when a K3 surface $X$ has an elliptic fibration.

In Section \ref{sec2}, we discuss, when a K3 surface $X$ has an elliptic
fibration with infinite automorphism group (or the Mordell--Weil group).
See \cite{Nik2}.

In Section \ref{sec3}, we discuss our general results from
\cite{Nik2}, \cite{Nik7} and \cite{Nik8} on existence of non-zero  
exceptional elements of the Picard lattice with respect to 
the automorphism group of a K3 surface. Here an element
$x$ of the Picard lattice $S_X$ is called {\it exceptional} with respect
to the automorphism group $\Aut X$, if its orbit $\Aut X(x)$ in $S_X$
is finite. These results will give the main tools for further applications.

In Section \ref{sec4} (see also Section \ref{sec3}),  
we discuss, how many elliptic fibrations and elliptic 
fibrations with infinite automorphism group
a K3 surface can have. In particular, for the Picard number
$\rho(X)\ge 3$, we show that a K3 surface $X$ has infinite number of
elliptic fibrations and infinite number of elliptic fibrations with
infinite automorphism groups if it has one of them,
except a finite number of exceptional Picard lattices $S_X$. This is
mainly related to our results in \cite{Nik2}, \cite{Nik3}, 
\cite{Nik7} and \cite{Nik8}.

As examples of applications of the same ideas, in Section \ref{sec5}, 
we consider K3 surfaces with exotic structures: finite number of non-singular
rational curves, finite number of Enriques involutions, and with
naturally arithmetic automorphism groups.

We thank the referee for the careful reading of the paper and 
important remarks. 

\section{Results by Piatetsky-Shapiro  
and\\ 
Shafarevich about existence of\\ 
elliptic fibrations on K3 surfaces}\label{sec1}

We remind that an algebraic K3 surfaces $X$ is a non-singular projective
algebraic surface over an algebraically closed field $k$ such that the 
canonical class $K_X=0$ and the irregularity 
$q(X)=\dim H^1(X, {\mathcal O}_X)=0$. 

Further in this section,
$X$ is an algebraic K3 surface over an algebraically closed field.
We denote by $S_X$ the Picard lattice of $X$. It is well-known that $S_X$ is
a hyperbolic (i.e., of signature $(1,\rho(X)-1)$) even integral
lattice of rank $\rho(X)$ where $\rho(X)=\rk S_X$ is the Picard
number of $X$. It can be a very arbitrary even hyperbolic 
lattice of rank $\rho(X)\le 22$, and it is an 
important invariant of $X$.
It will be the most important for us.

According to Piatetsky-Shapiro and
Shafarevich \cite{PS}, elliptic fibrations on $X$ are in one to one
correspondence with primitive isotropic numerically
effective (i.e., $nef$) elements $c\in S_X$. That is $c\not=0$, $c^2=0$;
$c/n\in S_X$ only for integers
$n=\pm 1$; $c\cdot D\ge 0$ for any effective divisor $D$ on $X$. For such
$c\in S_X$, the complete
linear system $|c|$ is one-dimensional without base points, and it gives
an elliptic fibration $|c|:X\to \bp^1$,  that is the general
fibre is an elliptic curve (for $\cha k=2$ or $3$ it can be quasi-elliptic,
see \cite{RS}).

The following facts were also observed in \cite{PS}.
By the Riemann-Roch Theorem
for surfaces, any irreducible curve $D$ on $X$ with negative square
has $D^2=-2$, and it is then
rational non-singular, hence $\bp^1$. It follows that the $nef$ cone
$NEF(X)\subset V^+(X)\subset S_X\otimes \br$
(or $\M(X)=NEF(X)/\br^+\subset \La(S_X)=V^+(X)/\br^+$) is a fundamental
chamber for the
reflection group $W^{(2)}(S_X)\subset O(S_X)$ generated by 2-reflections
$s_\delta:x\to x+(x\cdot \delta)\delta$ in
elements $\delta\in S_X$ with $\delta^2=-2$. Moreover, classes of
non-singular rational curves on $X$ are in one
to one correspondence to elements $\delta\in S_X$
with $\delta^2=-2$ which are perpendicular to
codimension one faces of $\M(X)$ and directed outwards. See \cite[Sec. 3]{RS}.
We denote this set by $P(\M(X))$.
Here $\br^+$ is the set of all positive real numbers,
$V^+(X)$ is a half-cone of the cone $V(S_X)$ of elements of
$S_X\otimes \br$
with positive square, and $\La(S_X)$ is the hyperbolic
space related to $S_X$ or $X$. We denote by
$$
A(\M(X))=\{\phi \in O(S_X)\ |\ \phi(V^+(X))=V^+(X)\ \text{and}\
\phi(\M(X))=\M(X)\}
$$
the symmetry group of $\M(X)$, and then $\{\pm 1\}W^{(2)}(S_X)\rtimes
A(\M(X))=O(S_X)$ is the semi-direct product.
By the theory of arithmetic groups (or integral quadratic forms theory),
then the fundamental
domain for $O(S_X)$ is the same as the fundamental domain for $A(\M(X))$ in
$\M(X)$. In  particular, this fundamental domain is 
a finite rational polyhedron.

It follows that {\it there exists only a finite number of
elliptic pencils on $X$
up to the action of $A(\M(X))$}. Similarly, {\it there exists only a 
finite number of non-singular rational curves on $X$
up to the action of $A(\M(X))$.} Moreover, for any isotropic element
$c^\prime\in S_X$, there exists $w\in W^{(2)}(S_X)$
such that $\pm w(c^\prime)$ is $nef$. Thus {\it $X$ has an elliptic fibration
if and only if the Picard lattice
$S_X$ represents zero: there exists $0\not=x\in S_X$ with $x^2=0$}.
In particular, this is valid if $\rho(X)\ge 5$.

The fundamental result of \cite{PS} which follows from the Global
Torelli Theorem for K3 surfaces (also proved in \cite{PS}) is
that the action of $\Aut X$ in $S_X$ has only finite kernel (see also
\cite{RS} if $\cha k>0$), and for $\cha k=0$
it gives a finite index subgroup in $A(\M(X))$. In particular, for
$\cha k=0$, up to finite groups,
we have natural isomorphisms of groups:
$$
\Aut X\thickapprox A(\M(X))\cong O^+(S_X)/W^{(2)}(S_X)
$$
where $O^+(S_X)=\{\phi \in O(S_X)\ |\ \phi(V^+(X))=V^+(X)\}$
is the subgroup of $O(S_X)$ of index $2$.
It follows that {\it for $\cha k=0$, a K3 surface $X$ has
only finite number of elliptic fibrations and non-singular rational curves
up to the action of the automorphism
group $\Aut X$. } This is the same as for all $nef$ elements $h\in S_X$ with a
fixed positive square $h^2>0$.

\section{Existence of elliptic fibrations with infinite automorphism groups
on K3 surfaces}
\label{sec2} Further, $X$ is a
K3 surface over an algebraically closed field $k$.

Let $c\in S_X$ be a primitive isotropic $nef$ element.
By the theory of elliptic surfaces, see e.g.\cite[Ch. VII]{S}
(or by Global Torelli Theorem for K3 surfaces, if $\cha k=0$),
the group $\Aut(c)$ of automorphisms of of the elliptic fibration
$|c|:X \to \bp^1$ is, up to finite index, the abelian
group $\bz^{r(c)}$ where 
\begin{equation}
r(c)=\rk c^\perp-\rk (c^\perp)^{(2)}.
\label{r}
\end{equation}
Here $c^\perp$ is the orthogonal complement to $c$ in $S_X$
(obviously, $\rk c^\perp=\rho(X)-1$), and
the sublattice $(c^\perp)^{(2)}\subset c^\perp$ is generated by $c$ and
by all elements with square $(-2)$ in $c^\perp$.
Equivalently, $(c^\perp)^{(2)}$ is generated by all irreducible components
of fibres of $|c|:X\to \bp^1$. In particular, $\Aut(c)$
is finite if and only if either $\rho(X)=2$, or $c^\perp$ is generated by
$c$ and by  elements with square $(-2)$, up to finite index.
Up to finite index, $\Aut(c)$ is the same as the Mordell-Weil group of
the elliptic fibration $c$ when we consider only automorphisms
from $\Aut(c)$ which act trivially on the base $\bp^1$. 

\medskip

The interesting question is:

\medskip 

{\it When does $X$ have elliptic fibrations with infinite automorphism groups?}

\medskip 

It is important, for example, for studying the dynamics of $\Aut X$ 
(e.g. see \cite{Can}) and the arithmetic of $X$.

The main obstruction for the existence of the fibrations in question 
is the finiteness of the automorphism group $\Aut X$ of $X$.
Indeed, if $\Aut X$ is finite, then automorphism groups of all
elliptic fibrations $c$ on $X$ are also finite since $\Aut(c)\subset \Aut X$.

Surprisingly, for $\rho(X)\ge 6$, this obstruction is sufficient
and necessary according to \cite{Nik2}, and this is valid
for $k$ of any characteristic. 
These results can be formulated for arbitrary hyperbolic lattices $S$ if
one fixes a fundamental chamber $\M\subset \La(S)$
for $W^{(2)}(S)$ and considers fundamental primitive isotropic
elements $c\in S$ that is $\br^+c\in \overline{\M}$.
Instead of $\Aut X$ one should consider the symmetry group
$A(\M)\subset O^+(S)$ or $O^+(S)/W^{(2)}(S)$.

By \eqref{r}, all elliptic fibrations on $X$ have finite automorphism groups
if and only if the hyperbolic
lattice $S=S_X$ satisfies the property:
\begin{equation}
\rk (c^\perp)=\rk (c^\perp)^{(2)}\ \ \ \text{for any isotropic\ \ } c\in S.
\label{finel}
\end{equation}

We have the following results from \cite{Nik2}.

\begin{theorem} Let $S$ be an even hyperbolic lattice of rank
$\rho=\rk S\ge 6$ (respectively, $X$ is
a K3 surface over an algebraically closed field, and $\rho(X)\ge 6$).
Then the following conditions
 (a), (b), (c) below are equivalent:

 (a) $S$ satisfies condition \eqref{finel} (respectively, automorphism
 groups of all elliptic fibrations on $X$
 are finite).

(b) The group $A(\M)\cong O^+(S)/W^{(2)}(S)$ is finite,
(respectively, $\Aut X$ is finite).

(c) The lattice $S$ belongs to the finite list of even hyperbolic
lattices below found in \cite{Nik2} (respectively,
$S=S_X$ is one of the lattices from the list). 
\label{elK36}
\end{theorem}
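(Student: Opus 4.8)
The plan is to establish the cycle $(b)\Rightarrow(a)\Rightarrow(c)\Rightarrow(b)$; in each case the statement for a K3 surface $X$ reduces to the one for $S=S_X$ by the results of \cite{PS} recalled in Section~\ref{sec1} (note that $\rho\ge6$ forces $S$ to represent zero, so \eqref{finel} is a genuine condition imposed on infinitely many isotropic $c$). Throughout I use that $O^+(S)=W^{(2)}(S)\rtimes A(\M)$ and that the fundamental domain of $A(\M)$ in $\M$ is always a finite rational polyhedron; hence $A(\M)$ is finite if and only if $\M$ itself has finite volume in $\La(S)$, i.e. $W^{(2)}(S)$ has finite index in $O^+(S)$.

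\textbf{$(b)\Rightarrow(a)$ (the easy direction, not using $\rho\ge6$).} Argue contrapositively. Suppose \eqref{finel} fails: there is a primitive isotropic $c\in S$ with $r(c)=\rk c^\perp-\rk(c^\perp)^{(2)}>0$, and after an element of $W^{(2)}(S)$ we may take $\br^+c\in\overline{\M}$. By the theory of elliptic surfaces, i.e. by \eqref{r} (or, lattice-theoretically, by analysing the parabolic subgroup of $O^+(S)$ stabilizing $\br^+c$ --- a compact-by-abelian group of horospherical similitudes, with horospherical translation lattice of rank $\rho-2$ --- modulo the Euclidean reflection subgroup $W^{(2)}_c$ generated by the $(-2)$-vectors of $c^\perp$, whose translation part has rank $\rk(c^\perp)^{(2)}-1$), the stabilizer of $\br^+c$ in $O^+(S)/W^{(2)}(S)\cong A(\M)$ still contains a free abelian group of rank $r(c)>0$ --- geometrically a finite-index subgroup of $\Aut(c)\subseteq\Aut X$. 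Hence $A(\M)$, equivalently $\Aut X$, is infinite.

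\textbf{$(a)\Rightarrow(b)$.} By the reduction above I must show that \eqref{finel} forces $\M$ to have finite volume. A convex Coxeter polyhedron in hyperbolic space has infinite volume exactly when it fails to ``close up'' near some point of the absolute; for the chamber $\M$ of the $2$-reflection group such a point is a primitive isotropic $c\in S$, and the faces of $\M$ passing through $\br^+c$ are precisely those cut out by the $(-2)$-vectors $\delta\in c^\perp$ (since $\delta\in P(\M)$ and $\delta\cdot c=0$). These reflections generate a Euclidean reflection group on the horosphere at $c$, acting on $(c^\perp/\langle c\rangle)\otimes\br$; $\M$ has finite volume near $c$ iff this affine group has full rank $\rho-2$, i.e. iff the $(-2)$-vectors of $c^\perp$ span $c^\perp$ modulo $c$ rationally --- which is exactly \eqref{finel} at $c$. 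Thus \eqref{finel} makes every cusp of $\M$ of finite local volume; the content of the theorem for $\rho\ge6$ is that this is then the \emph{only} obstruction, so $\M$ has finite total volume and $A(\M)$ is finite. (For $\rho\le5$ this last step genuinely fails --- one can have $A(\M)$ infinite with \eqref{finel} holding --- so the rank hypothesis is essential here.)

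\textbf{$(a)\Rightarrow(c)\Rightarrow(b)$ (the classification, which I expect to be the main obstacle).} Granting $(a)\Leftrightarrow(b)$, it remains to enumerate the even hyperbolic lattices of rank $\ge6$ with $A(\M)$ finite --- equivalently the $2$-reflective ones --- and to verify \eqref{finel} for each. I would first bound $\rho$: condition \eqref{finel} forces, for \emph{every} primitive isotropic $c$, the negative-definite lattice $c^\perp/\langle c\rangle$ of rank $\rho-2$ to contain a root ($ADE$) sublattice of full rank and finite index; since for $\rho\ge5$ isotropic $c$ are plentiful, $S$ must carry very large root subsystems, and the ``method of narrow parts of the fundamental polyhedron'' of \cite{Nik2} (bounding the rank of a hyperbolic lattice whose $2$-reflection group has a finite-sided fundamental chamber) yields an explicit ceiling on $\rho$. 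For each admissible $\rho$ one then enumerates the finitely many candidate lattices and, running Vinberg's algorithm for $W^{(2)}(S)$, checks which have $\M$ finite-sided with finite symmetry group; this produces the displayed list. Finally $(c)\Rightarrow(b)$ is a bounded, lattice-by-lattice verification (exhibit $\M$ and its finite symmetry group), which simultaneously re-proves $(c)\Rightarrow(a)$ by checking \eqref{finel} on the finitely many $W^{(2)}(S)$-orbits of isotropic vectors. The heavy lifting is entirely in the rank bound and the enumeration.
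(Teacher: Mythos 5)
First, a point of comparison: the paper itself gives no proof of Theorem \ref{elK36} --- it is quoted from \cite{Nik2} --- so there is no in-paper argument to measure yours against, and your proposal has to stand as a proof on its own.

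Judged that way, your implication $(b)\Rightarrow(a)$ is correct and complete: the stabilizer of a fundamental primitive isotropic $c$ in $A(\M)$ contains $\bz^{r(c)}$ up to finite index (this is exactly \eqref{finfund}), so $r(c)>0$ for a single $c$ already forces $A(\M)$, hence $\Aut X$, to be infinite. Your cusp picture is also the right one: $\M$ meets the horosphere at $c$ in a fundamental alcove for the affine reflection group of the $(-2)$-vectors of $c^\perp$, and this alcove is compact precisely when $\rk (c^\perp)^{(2)}=\rk c^\perp$. The genuine gap is everything beyond this. For $(a)\Rightarrow(b)$ you write that for $\rho\ge 6$ the closing-up of all cusps is ``the only obstruction'' to finite volume of $\M$ --- but that sentence \emph{is} the theorem, not an argument for it, and you yourself observe that the same assertion fails at $\rho=5$ (the series $\langle 2^m\rangle\oplus D_4$ and $\langle 2\cdot 3^{2n-1}\rangle\oplus 2A_2$ of Theorem \ref{elK35} satisfy \eqref{finel} with $A(\M)$ infinite). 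So some input specific to $\rho\ge 6$ is indispensable and is not supplied. Your route to $(c)$ then opens with ``granting $(a)\Leftrightarrow(b)$,'' which is circular at that stage, since only $(b)\Rightarrow(a)$ has been established; the non-circular version would enumerate directly the lattices satisfying the arithmetic condition \eqref{finel} (you gesture at this via the full-rank root sublattice of $c^\perp/\bz c$ and the ``narrow parts'' method), but the rank bound, the enumeration, and the lattice-by-lattice verification are precisely the content of \cite{Nik2} and none of them is carried out. In sum: you have correctly isolated the easy direction and located the difficulty, and your roadmap for the hard part is plausible and consistent with how \cite{Nik2} proceeds, but as written the proposal reduces the theorem to its hard part rather than proving it.
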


The list of lattices found in \cite{Nik2} is the following
(we use notations from \cite{Nik1} and \cite{Nik2}, which
are now standard; $\oplus$ is orthogonal sum of lattices, $U$ is the 
even unimodular lattice of signature $(1,1)$, $A_n$, $D_m$ and $E_k$ 
are negative definite root lattices corresponding to root systems 
${\mathbb A}_n$, ${\mathbb D}_m$ and ${\mathbb E}_k$ respectively,  
$S(\lambda)$ is obtained from 
a lattice $S$ by multiplication of its form by $\lambda \in \bz$, 
$\langle A \rangle$ is a lattice with the matrix $A$ in some basis): 

\medskip

\noindent
{\it The list of all even hyperbolic lattices $S$ with
$[O(S):W^{(2)}(S)]<\infty$ and $\rk S\ge 6$ (see \cite{Nik2}):}

\medskip

\noindent
$S=U\oplus 2E_8\oplus A_1$; 
$U\oplus 2E_8$; 
$U\oplus E_8\oplus E_7$;
$U\oplus E_8\oplus D_6$; 
$U\oplus E_8\oplus D_4\oplus A_1$;
$U\oplus E_8\oplus D_4$, $U\oplus D_8\oplus D_4$, $U\oplus E_8\oplus 4A_1$;
$U\oplus E_8\oplus 3A_1$, $U\oplus D_8\oplus 3A_1$, $U\oplus A_3\oplus E_8$;
$U\oplus E_8\oplus 2A_1$, $U\oplus D_8\oplus 2A_1$, 
$U\oplus D_4\oplus D_4\oplus 2A_1$, $U\oplus A_2\oplus E_8$;
$U\oplus E_8\oplus A_1$, $U\oplus D_8\oplus A_1$, 
$U\oplus D_4\oplus D_4\oplus A_1$, $U\oplus D_4\oplus 5A_1$; 
$U\oplus E_8$, $U\oplus D_8$, $U\oplus E_7\oplus A_1$,
$U\oplus D_4\oplus D_4$, $U\oplus D_6\oplus 2A_1$,
$U(2)\oplus D_4\oplus D_4$, $U\oplus D_4\oplus 4A_1$,
$U\oplus 8A_1$, $U\oplus A_2\oplus E_6$; 
$U\oplus E_7$, $U\oplus D_6\oplus A_1$, 
$U\oplus D_4\oplus 3A_1$, $U\oplus 7A_1$, $U(2)\oplus 7A_1$,
$U\oplus A_7$, $U\oplus A_3\oplus D_4$, $U\oplus A_2\oplus D_5$,
$U\oplus D_7$, $U\oplus A_1\oplus E_6$; 
$U\oplus D_6$, $U\oplus D_4\oplus 2A_1$,
$U\oplus 6A_1$, $U(2)\oplus 6A_1$,
$U\oplus 3A_2$, $U\oplus 2A_3$, $U\oplus A_2\oplus A_4$,
$U\oplus A_1\oplus A_5$, $U\oplus A_6$, $U\oplus A_2\oplus D_4$,
$U\oplus A_1\oplus D_5$, $U\oplus E_6$;
$U\oplus D_4\oplus A_1$, $U\oplus 5A_1$,
$U(2)\oplus 5A_1$, $U\oplus A_1\oplus 2A_2$, $U\oplus 2A_1\oplus A_3$,
$U\oplus A_2\oplus A_3$, $U\oplus A_1\oplus A_4$, $U\oplus A_5$, $U\oplus D_5$;
$U\oplus D_4$, $U(2)\oplus D_4$, $U\oplus 4A_1$,
$U(2)\oplus 4A_1$,
$U\oplus 2A_1\oplus A_2$, $U\oplus 2A_2$, $U\oplus A_1\oplus A_3$,
$U\oplus A_4$, $U(4)\oplus D_4$, $U(3)\oplus 2A_2$.

\vskip0.5cm

Thus, a K3 surface $X$ over an algebraically closed field and with
$\rho(X)\ge 6$ has an elliptic fibration with infinite automorphism
group if and only if its Picard lattice $S_X$ is different from each
lattice of this finite list.
If the Picard lattice $S_X$ of $X$ is one of lattices from the list,
then not only automorphism groups of
all elliptic fibrations on $X$ are finite, but the full automorphism
group $\Aut X$ is finite.

\vskip0.5cm

If $\rk S=5$, then similar theorem is valid if one excludes two infinite
series of even hyperbolic lattices, see \cite{Nik2}.

\begin{theorem} Let $S$ be an even hyperbolic lattice of the rank
$\rk S=5$ and $S$ is different from lattices
$\langle 2^m\rangle\oplus D_4$, $m\ge 5$, and
$\langle 2\cdot 3^{2n-1}\rangle \oplus 2A_2$, $n\ge 2$
(respectively, a K3 surface $X$ over an algebraically closed field has
$\rho(X)=5$ and $S_X$ is different from the lattices of these two series).

Then the following conditions (a), (b), (c) below are equivalent: 

 (a) $S$ satisfies the condition \eqref{finel} (respectively, automorphism
 groups of all elliptic fibrations on $X$
 are finite).

(b) The group $A(\M)\cong O^+(S)/W^{(2)}(S)$ is finite, (respectively,
$\Aut X$ is finite).

(c) The lattice $S$ belongs to the finite list of even hyperbolic lattices
of rank $5$ below found in \cite{Nik2} (respectively,
$S_X$ is one of the lattices from this list).

\medskip

If $S$ is one of  lattices
$\langle 2^m\rangle\oplus D_4$, $m\ge 5$,
and $\langle 2\cdot 3^{2n-1}\rangle \oplus 2A_2$,
$n\ge 2$, then
$S$ satisfies \eqref{finel}, but the group
$A(\M)\cong O^+(S)/W^{(2)}(S)$ is infinite
(equivalently, if $S_X$ is one of lattices from these two series,
then all elliptic fibrations on $X$ have
finite automorphism groups, but $\Aut X$
is infinite if $\cha k=0$).
\label{elK35}
\end{theorem}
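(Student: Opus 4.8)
The plan is to separate Theorem~\ref{elK35} into the easy implications among (a), (b), (c) — which hold for every even hyperbolic lattice of rank $5$ — and the hard part, which is a lattice classification and is where the two exceptional series get pinned down, and then to treat the two series themselves by direct computation. The two easy directions: (c)$\Rightarrow$(b) is a finite verification, since for each of the finitely many lattices $S$ listed in (c) one runs Vinberg's algorithm (exactly the computation of \cite{Nik2}) and checks that the fundamental polyhedron $\M$ for $W^{(2)}(S)$ has finite volume, i.e.\ $A(\M)\cong O^+(S)/W^{(2)}(S)$ is finite. And (b)$\Rightarrow$(a) I would prove in contrapositive form: if \eqref{finel} fails there is, by \eqref{r}, a primitive isotropic $c\in S$ with $r(c)=\rk c^{\perp}-\rk (c^{\perp})^{(2)}>0$; by the theory of elliptic surfaces recalled above the group of translation automorphisms of $|c|$ (its Mordell--Weil group) is $\bz^{r(c)}$ up to finite index, it fixes $c$ and preserves the chamber $\M$ whose closure contains $\br^{+}c$, hence embeds in $A(\M)$, so $A(\M)$ is infinite. (In the K3 picture this is just $\Aut(c)\subset\Aut X$ together with $\Aut X\approx A(\M)$ for $\cha k=0$.)

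For the main part I would first restate \eqref{finel} intrinsically. For a primitive isotropic $c\in S$ the quotient $F_{c}=c^{\perp}/\bz c$ is a negative definite lattice of rank $3$, the frame of $|c|$, and $(c^{\perp})^{(2)}/\bz c$ is precisely the sublattice of $F_{c}$ generated by the images of the $(-2)$-vectors of $c^{\perp}$, i.e.\ by the roots of $F_{c}$. Thus \eqref{finel} holds if and only if for every primitive isotropic $c$ the roots of $F_{c}$ span $F_{c}\otimes\bq$, equivalently $F_{c}$ is, up to finite index, an orthogonal sum of root lattices among $3A_{1}$, $A_{1}\oplus A_{2}$, $A_{3}$. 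So the task is to classify the even hyperbolic lattices $S$ of rank $5$ all of whose frames are root-spanned in this sense, and to show the answer is the finite list of (c) together with the two series; from this classification both the remaining implication (a)$\Rightarrow$(c) (for $S$ outside the series) and the final assertion about the series follow.

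This classification is the heart of \cite{Nik2} and the step I expect to be the main obstacle. The natural approach is again through reflectivity: the cusps of $\M$ (isotropic rays in $\overline{\M}$) correspond to the vectors $c$, and \eqref{finel} is exactly the requirement that the parabolic subgroup at every cusp be generated by $2$-reflections. One then shows, using Nikulin's finiteness theorems for reflective hyperbolic lattices and the discriminant-form calculus of \cite{Nik1}, that in rank $5$ the determinant of $S$ is bounded unless the large part of the determinant is concentrated in a one-dimensional orthogonal summand — which no isotropic vector can exploit — sitting next to a rigid root summand ($D_{4}$, resp.\ $2A_{2}$) that keeps every frame root-spanned; the bounded-determinant case yields the finite list (after a Vinberg-algorithm pass), while the unbounded case, run through explicitly, produces exactly $\langle 2^{m}\rangle\oplus D_{4}$ and $\langle 2\cdot 3^{2n-1}\rangle\oplus 2A_{2}$. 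Ruling out every other infinite-index possibility, and isolating precisely these two congruence families, is the delicate computational core, and it is also what fixes the thresholds $m\ge 5$ and $n\ge 2$ (for smaller $m$ or $n$ the lattice already lies in, or is isometric to a member of, the finite list (c)).

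Finally, for $S=\langle 2^{m}\rangle\oplus D_{4}$ with $m\ge 5$ and $S=\langle 2\cdot 3^{2n-1}\rangle\oplus 2A_{2}$ with $n\ge 2$ I would verify the two assertions directly. First, enumerate the primitive isotropic vectors of $S$ up to $W^{(2)}(S)$ — finitely many $O(S)$-classes, parametrised by how the coordinate meets the rank-one summand — compute the frame $F_{c}$ in each case, and check that its roots span $F_{c}\otimes\bq$ (the $D_{4}$, resp.\ $2A_{2}$, summand always supplies enough orthogonal roots), which gives \eqref{finel}. Second, run Vinberg's algorithm far enough to exhibit an infinite sequence of walls of $\M$, equivalently an element of infinite order in $A(\M)$ (a parabolic or hyperbolic product of $2$-reflections preserving $\M$), so that $[O(S):W^{(2)}(S)]=\infty$; for $\cha k=0$ the corresponding isometry has infinite order in $\Aut X$ by the Global Torelli Theorem recalled in Section~\ref{sec1}, so $\Aut X$ is infinite although every elliptic fibration on $X$ still has finite automorphism group. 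Carrying out the bookkeeping of the first step uniformly in $m$ and $n$ is what reproduces these two families and excludes all others.
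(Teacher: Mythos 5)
The first thing to say is that the paper does not prove Theorem~\ref{elK35} at all: it is quoted as a result of \cite{Nik2}, and the only ``proof'' in the text is that citation. So the comparison is really between your outline and the classification carried out in \cite{Nik2}. Your scaffolding is sound and matches the standard way the result is organized: (c)$\Rightarrow$(b) is a finite check of $2$-reflectivity lattice by lattice; (b)$\Rightarrow$(a) is the contrapositive via $A(\M)_c\thickapprox \bz^{r(c)}$ (equation \eqref{finfund}, or $\Aut(c)\subset\Aut X$ in the geometric version); and the reformulation of \eqref{finel} as ``every frame $F_c=c^\perp/\bz c$ is spanned over $\bq$ by its $(-2)$-roots, hence contains $3A_1$, $A_1\oplus A_2$ or $A_3$ with finite index'' is correct and is the right intrinsic way to set up the classification. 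One small caveat: for the abstract-lattice half of the statement you cannot invoke ``the theory of elliptic surfaces'' to produce the infinite subgroup of $A(\M)_c$; you need the purely arithmetic construction (Eichler/unipotent transformations attached to the isotropic vector $c$), which is what \eqref{finfund} encodes.

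The genuine gap is that the entire substance of the theorem --- the implication (a)$\Rightarrow$(c) outside the two series, the isolation of exactly the families $\langle 2^m\rangle\oplus D_4$, $m\ge 5$, and $\langle 2\cdot 3^{2n-1}\rangle\oplus 2A_2$, $n\ge 2$, and the verification that these satisfy \eqref{finel} while $[O(S):W^{(2)}(S)]=\infty$ --- is left as an acknowledged black box. Your paragraph describing how the determinant ``is bounded unless concentrated in a rank-one summand sitting next to a rigid root summand'' is a plausible narrative of the answer, not an argument: no bound is derived, no case analysis of discriminant forms is performed, the thresholds $m\ge 5$ and $n\ge 2$ are asserted rather than computed, and the Vinberg-algorithm runs that would certify both the finite list and the infinitude of $A(\M)$ for the two series are described but never executed. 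Since everything you do prove ((c)$\Rightarrow$(b)$\Rightarrow$(a) and the frame reformulation) is the routine part, the proposal as it stands establishes the theorem only modulo the classification that \emph{is} the theorem; to close it you would have to actually reproduce the rank-$5$ case of the classification in \cite{Nik2}.
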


The list of lattices of rank 5 found in \cite{Nik2} is as follows:

\medskip

\noindent
{\it The list of all even hyperbolic lattices $S$
with $[O(S):W^{(2)}(S)]<\infty$
and $\rk S=5$ (see \cite{Nik2}):}

\medskip 

\noindent
$S=U\oplus 3A_1$, $U(2)\oplus 3A_1$, $U\oplus A_1\oplus A_2$,
$U\oplus A_3$, $U(4)\oplus 3A_1$,
$\langle 2^k\rangle \oplus D_4$, $k=2,\,3,\,4$,
$\langle 6 \rangle \oplus 2A_2$.

\medskip 

Thus, a K3 surface $X$ with $\rho(X)=5$ and any $\cha k$ has elliptic
fibrations with infinite automorphism
groups if and only if its Picard lattice $S_X$ is different from each
lattice of this finite list and
from the lattices of 2 infinite series in Theorem \ref{elK35}.
If the Picard lattice of $X$ is one of lattices from the finite list,
then not only automorphism groups of
all elliptic fibrations on $X$ are finite,
but the full automorphism group $\Aut X$ is finite.
If the Picard lattice of $X$ is one of lattices from the two infinite
series of lattices of Theorem \ref{elK35},
then the automorphism groups of all elliptic fibrations on $X$ are finite,
but $\Aut X$ is infinite
if $\cha k=0$ (if $\cha k>0$, it is not known).

\vskip0.5cm

If the Picard number $\rho (X)=4$ or $3$, no results, 
similar to that in Theorems 1 and 2, are known, except 
results which we shall cite below at the end of this section.  

\vskip0.5cm

If $\rho(X)=2$, then the automorphism groups of all elliptic
fibrations on $X$ are evidently finite.
If $\rho(X)=1$, then $X$ has no elliptic fibrations.

In particular, Theorems \ref{elK36} and \ref{elK35} describe all
even hyperbolic lattices $S$
having finite group $A(\M)\cong O^+(S)/W^{(2)}(S)$ (they are called
elliptically $2$-reflective) of rank $\rho=\rk S\ge 5$.
Similar finite description of elliptically $2$-reflective even hyperbolic
lattices was obtained for $\rho=4$ ($14$ lattices)
in \cite{Vin4} (see also \cite{Nik6}), and for $\rho=3$ ($26$ lattices)
in \cite{Nik5}.\footnote{We must correct the list of lattices in
\cite{Nik5}: the lattices $S_{6,1,2}^\prime$ and $S_{6,1,1}$ are
isomorphic.} Finiteness  was also generalized to arbitrary
arithmetic hyperbolic reflection groups and
corresponding reflective hyperbolic lattices over rings of integers
of totally real algebraic number fields.
See \cite{Nik3}, \cite{Nik4}, \cite{Nik6} and \cite{Vin2} 
(see also \cite{Vin3}).

\section{Elliptic fibrations with infinite\\
automorphism groups and  exceptional\\
elements in Picard lattices for K3 surfaces} \label{sec3}

Below, $X$ is a K3 surface over an algebraically closed field.

We consider the following general notion. For a hyperbolic lattice
$S$ and a subgroup $G\subset O(S)$, we
call $x\in S$ exceptional with respect to $G$ if its stabilizer
subgroup $G_x$ has finite index in $G$; equivalently,
the orbit $G(x)$ is finite. All exceptional elements with respect
to $G$ define exceptional sublattice $E\subset S$
with respect to $G$. Since $S$ is hyperbolic, logically the following
4 cases are possible:

(i) {\it Elliptic type of $G$.} The exceptional sublattice $E$ for $G$
is hyperbolic. Obviously, then $G$ is finite and
$E=S$. Then $E$ is called {\it hyperbolic.}

(ii) {\it Parabolic type of $G$.} The exceptional sublattice $E$ for
$G$ is semi-negative definite and has 1-dimensional kernel.
Then $E$ is called {\it parabolic.}

(iii) {\it Hyperbolic type of $G$.} The exceptional sublattice $E$ for $G$
is negative definite. Then $E$ is called {\it elliptic.}

(iv) {\it General hyperbolic type of $G$.} The exceptional sublattice $E$
for $G$ is zero.

Replacing $G$ by the action of $\Aut X$ in $S_X$,  we
obtain the following main definition.
An element of the Picard lattice $x\in S_X$ is
called {\it exceptional (with respect to $\Aut X$)} if its
stabilizer subgroup $(\Aut X)_x$ has finite index in $\Aut X$,
equivalently, the orbit $(\Aut X)(x)$ of $x$
is finite.

All exceptional elements of $S_X$ define a primitive sublattice
$E(S_X)$. We call it {\it the exceptional
sublattice of the Picard lattice (for $\Aut X$).}
This sublattice was introduced in \cite{Nik2} (it was denoted
as $R(S_X)$ in \cite{Nik2}), and the results
which we discuss below were mentioned and in fact proved in
\cite{Nik2} and \cite{Nik7}, \cite{Nik8} (see
\cite[Sect. 3]{Nik8}).
Below we just give more details.

Let us assume that $X$ has at least one elliptic fibration $c$
with infinite automorphism group. Then we have
the following statement where for a sublattice $F\subset S_X$ we
denote by $F_{pr}\subset S_X$ the primitive sublattice
$F_{pr}=S_X\cap (F\otimes \bq)\subset S_X\otimes \bq$ generated by $F$.

\begin{theorem} Let $X$ be a K3 surface over an algebraically closed field
which has at least one elliptic fibration with infinite automorphism
group.

Then the exceptional sublattice $E(S_X)$ is equal to
\begin{equation}
E(S_X)=
\bigcap_{c}{(c^\perp)^{(2)}}_{pr}
\label{exc}
\end{equation}
where $c$ runs through all elliptic fibrations on $X$ with infinite
automorphism groups (or the Mordell--Weil groups).

In particular, two exceptional sublattices of $S_X$, for $\Aut X$, and for the
subgroup of $\Aut X$ generated by Mordell--Weil groups of 
all elliptic fibrations with infinite 
automorphism groups on $X$, coincide.
\label{thexcX}
\end{theorem}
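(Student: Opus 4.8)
The plan is to prove the two inclusions separately, exploiting the structure of $\Aut X$ coming from the elliptic fibrations. Recall from Section \ref{sec2} that for a primitive isotropic $nef$ element $c$ with infinite $\Aut(c)$, the Mordell--Weil part of $\Aut(c)$ is, up to finite index, a free abelian group $\bz^{r(c)}$ acting on $S_X$, and this action fixes $c$ and acts trivially on the sublattice $(c^\perp)^{(2)}$ (its elements permute the irreducible components of fibres of $|c|$, but fix each of them up to a finite-index subgroup, since translations by sections act trivially on the Néron--Severi classes of fibre components). More precisely the subgroup $\Aut(c)$ (up to finite index) acts on $S_X$ fixing $(c^\perp)^{(2)}$ pointwise, while acting with infinite orbits on the complement. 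I would first isolate this as a lemma: for each such $c$, the set of elements of $S_X$ fixed up to finite index by $\Aut(c)$ is exactly $((c^\perp)^{(2)})_{pr}$. The inclusion of the fixed set into $((c^\perp)^{(2)})_{pr}$ needs the ``only if'': an element with infinite $\Aut(c)$-orbit cannot be exceptional for $\Aut(c)$, and one must check $(c^\perp)^{(2)}$ is precisely the annihilator of the $\bz^{r(c)}$-action — this is where the formula \eqref{r} for $r(c)$ is used, since $\rk c^\perp - \rk (c^\perp)^{(2)} = r(c)$ says the action has exactly the right ``size''.

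Granting that lemma, the inclusion $E(S_X) \subseteq \bigcap_c ((c^\perp)^{(2)})_{pr}$ is immediate: if $x$ is exceptional for $\Aut X$, then $(\Aut X)_x$ has finite index in $\Aut X$, hence $(\Aut(c))_x = \Aut(c) \cap (\Aut X)_x$ has finite index in $\Aut(c)$ for every $c$ with infinite automorphism group; so $x$ is fixed up to finite index by each such $\Aut(c)$, and therefore $x \in ((c^\perp)^{(2)})_{pr}$ for every such $c$, i.e. $x$ lies in the intersection. The opposite inclusion $\bigcap_c ((c^\perp)^{(2)})_{pr} \subseteq E(S_X)$ is the substantive direction. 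Here I would argue that the subgroup $G_0 \subseteq \Aut X$ generated by the Mordell--Weil groups of all elliptic fibrations $c$ with infinite automorphism group already has finite index in $\Aut X$: this is precisely the content of the theorems of Section \ref{sec2} (Theorems \ref{elK36}, \ref{elK35}) in spirit — when $\Aut X$ is infinite, the reflection-type and elliptic-fibration structure of $O^+(S_X)/W^{(2)}(S_X)$ forces the Mordell--Weil translations to exhaust $\Aut X$ up to finite index; more robustly, one invokes that any $x$ lying in every $((c^\perp)^{(2)})_{pr}$ is fixed by every generator of $G_0$, hence fixed by all of $G_0$, and then one needs $[\Aut X : G_0] < \infty$ to conclude the full orbit $(\Aut X)(x)$ is finite.

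The main obstacle is exactly establishing that $G_0$ has finite index in $\Aut X$ — equivalently, that $\Aut X$ is, up to finite index, generated by Mordell--Weil groups of its elliptic fibrations with infinite automorphism group. I would derive this from the arithmetic-group description $\Aut X \thickapprox A(\M) \cong O^+(S_X)/W^{(2)}(S_X)$ of Section \ref{sec1}: an infinite such quotient contains unipotent elements corresponding to parabolic subgroups fixing an isotropic $c$, and these are (up to finite index) the Mordell--Weil translations; a Bruhat--Tits / Tits-building type argument, or the explicit analysis in \cite{Nik2} and \cite[Sect. 3]{Nik8}, shows the subgroup they generate is of finite index. Once that is in hand, the last sentence of the theorem — that $E(S_X)$ for $\Aut X$ coincides with $E(S_X)$ for $G_0$ — is a formal consequence, since for a finite-index subgroup $G_0 \le \Aut X$ an orbit $G_0(x)$ is finite if and only if $(\Aut X)(x)$ is finite. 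I expect the first lemma and the finite-index claim to carry essentially all the weight; the two inclusions themselves are then bookkeeping.
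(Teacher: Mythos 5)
Your first inclusion, $E(S_X)\subseteq \bigcap_c {(c^\perp)^{(2)}}_{pr}$, and the lemma it rests on (the exceptional elements for $\Aut(c)$, i.e.\ for the Mordell--Weil group of a single fibration $c$, form exactly ${(c^\perp)^{(2)}}_{pr}$) match the paper. The problem is the reverse inclusion. You reduce it to the claim that the subgroup $G_0\subseteq \Aut X$ generated by the Mordell--Weil groups of all elliptic fibrations with infinite automorphism group has finite index in $\Aut X$, and you offer only a gesture (``Bruhat--Tits / Tits-building type argument'') towards proving it. That claim is far stronger than what the theorem needs, is nowhere proved in the paper, and is genuinely doubtful: $O^+(S_X)/W^{(2)}(S_X)$ sits inside a rank-one arithmetic lattice, and for such lattices the subgroup generated by the cusp (parabolic) stabilizers need not have finite index. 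There is also a secondary gap in the same passage: an element $x$ lying in every ${(c^\perp)^{(2)}}_{pr}$ is only exceptional for each $\Aut(c)$ separately (fixed by a finite-index subgroup of each), and being exceptional for each member of a generating family of subgroups does not by itself make $x$ exceptional for the group they generate --- two involutions each moving $x$ in an orbit of size two can generate an infinite orbit.

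The paper avoids all of this with a short lattice-theoretic observation that your proposal is missing. Set $Ell(S_X)=\bigcap_c {(c^\perp)^{(2)}}_{pr}$. Each ${(c^\perp)^{(2)}}_{pr}$ is semi-negative definite with one-dimensional kernel $\bz c$, so $Ell(S_X)$ is either parabolic (if there is exactly one such $c$) or negative definite (if there are at least two, since two distinct isotropic directions cannot both lie in the kernel). Moreover $Ell(S_X)$ is $\Aut X$-invariant, because $\Aut X$ permutes the elliptic fibrations with infinite automorphism group and hence permutes the lattices being intersected. In the negative definite case $O(Ell(S_X))$ is finite, so $\Aut X$ acts on $Ell(S_X)$ through a finite group; in the parabolic case $\Aut X$ fixes the unique $c$ and permutes the finitely many fibre-component classes generating $(c^\perp)^{(2)}$, so again the action is finite. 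Hence every element of $Ell(S_X)$ has finite $\Aut X$-orbit, i.e.\ $Ell(S_X)\subseteq E(S_X)$, with no finite-index statement about $G_0$ required. The final ``in particular'' clause is then a formal sandwich: the exceptional sublattice for $G_0$ contains $E(S_X)$ (smaller group, larger exceptional sublattice) and is contained in each ${(c^\perp)^{(2)}}_{pr}$, hence in $Ell(S_X)=E(S_X)$; it does not rest on $[\Aut X:G_0]<\infty$.
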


\begin{proof} Simple calculations, using theory of elliptic
surfaces (see \cite[Ch. VII]{S}), show (see \cite{Nik2}) that
exceptional elements
for $(\Aut X)_c$ (equivalently, for the Mordell--Weil group of the
elliptic fibration $|c|$) in $S_X$ define the sublattice
$(c^\perp)^{(2)}_{pr}$. It follows that $E(S_X)\subset Ell(S_X)$
where $Ell(S_X)$ is the right hand side of \eqref{exc}.

Since $X$ has at least one elliptic fibration with infinite automorphism
group and $S_X$ is hyperbolic, $Ell(S_X)$ is
either semi-negative definite with one dimensional kernel $\bz c$ 
(that is $Ell(S_X)$ is parabolic) when $X$ has only one elliptic 
fibration $c$ with infinite automorphism group, 
or $Ell(S_X)$ is negative definite
(that is $Ell(S_X)$ is elliptic) if $X$ has more than one elliptic fibrations 
with infinite automorphism
groups. In both cases, $\Aut X$ gives finite action on $Ell(S_X)$.
It follows that $Ell(S_X) \subset E(S_X)$. Thus, $E(S_X)=Ell(S_X)$.

This finishes the proof. 
\end{proof}

Like above, for an abstract hyperbolic lattice $S$ (replacing $S_X$),
a fundamental chamber $\M\subset \La(S)$
for the reflection group $W^{(2)}(S)$
(replacing $\M(X)=NEF(X)/\br^+\subset \La(S_X)$),
and for
the group $A(\M)$ of symmetries of $\M$ (replacing $\Aut X$),
we can similarly consider exceptional elements $x\in S$ for $A(\M)$
and the sublattice $E(S)\subset S$ of all exceptional elements for
$A(\M)$. For
a fundamental primitive isotropic element $c\in S$ for $\M$
(replacing an elliptic fibration of $X$),
we can similarly consider the stabilizer subgroup $A(\M)_c\subset A(\M)$
(replacing the automorphism group $\Aut(c)$ of the elliptic fibration
$c$ on $X$). Like for K3 surfaces, we have isomorphism up to finite groups
\begin{equation}
A(\M)_c\thickapprox \bz^{r(c)},\ \ \ r(c)=\rk c^\perp-\rk (c^\perp)^{(2)}.
\label{finfund}
\end{equation}

Using \eqref{finfund}, exactly the same considerations as for
Theorem \ref{thexcX} give similar result
for arbitrary hyperbolic lattices.

\begin{theorem} Let $S$ be a hyperbolic lattice over $\bz$,
$\M\subset \La(S)$ a fundamental chamber for $W^{(2)}(S)$
and $A(\M)\subset O^+(S)$ its symmetry group. Let us assume that $S$ has
at least one fundamental primitive isotropic
element $c$ with infinite stabilizer subgroup $A(\M)_c$.

Then the exceptional sublattice $E(S)$ for $A(\M)$ is equal to
\begin{equation}
E(S)=
\bigcap_{c}{(c^\perp)^{(2)}}_{pr}
\label{excS}
\end{equation}
where $c$ runs through all fundamental primitive isotropic elements
$c$ for $\M$ with
infinite stabilizer subgroups $A(\M)_c$.
\label{thexcS}
\end{theorem}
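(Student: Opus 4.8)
The plan is to mirror the proof of Theorem~\ref{thexcX}, replacing every appeal to the geometry of the K3 surface $X$ by the purely lattice-theoretic input of \eqref{finfund} and the structure theory of $\M$ as a fundamental chamber for $W^{(2)}(S)$. First I would establish the analogue of the identification of exceptional elements for a single elliptic fibration: fix a fundamental primitive isotropic $c\in S$ with $A(\M)_c$ infinite, and show that the exceptional sublattice for $A(\M)_c$ inside $S$ is exactly ${(c^\perp)^{(2)}}_{pr}$. The inclusion ${(c^\perp)^{(2)}}_{pr}\subseteq E(S,A(\M)_c)$ is immediate because $(c^\perp)^{(2)}$ is generated by $c$ and finitely many $(-2)$-vectors orthogonal to $c$, and $A(\M)_c$ preserves $c$, preserves the set of $(-2)$-vectors, and preserves finiteness of orbits (the $(-2)$-vectors in $c^\perp$ perpendicular to faces of $\M$ meeting $\br^+ c$ form a finite set up to $A(\M)_c$, being the "rational curves in the fibres''). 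For the reverse inclusion one uses \eqref{finfund}: modulo a finite group $A(\M)_c\cong \bz^{r(c)}$ with $r(c)=\rk c^\perp-\rk(c^\perp)^{(2)}$, and this free abelian part acts on $c^\perp$ by unipotent transvections fixing $(c^\perp)^{(2)}$ pointwise (up to finite index) and acting with infinite orbits on $c^\perp/(c^\perp)^{(2)}_{pr}$ whenever that quotient is nonzero; hence any exceptional vector must already lie in $(c^\perp)^{(2)}_{pr}$, and in fact in $c^\perp$ since $c^\perp$ is itself the exceptional sublattice for the rank-one unipotent part as far as the perp condition goes. Intersecting over all such $c$ gives $E(S)\subseteq Ell(S):=\bigcap_c {(c^\perp)^{(2)}}_{pr}$.

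For the opposite inclusion $Ell(S)\subseteq E(S)$ I would argue exactly as in Theorem~\ref{thexcX}. Since by hypothesis at least one fundamental primitive isotropic $c$ with infinite $A(\M)_c$ exists, and $S$ is hyperbolic, $Ell(S)$ is a sublattice contained in $c^\perp$, hence semi-negative definite; its radical is contained in $\bigcap_c \bz c$. So either there is a unique such $c$ and $Ell(S)$ is semi-negative with radical $\bz c$ (parabolic case), or there are at least two such $c$, forcing the radical to be zero and $Ell(S)$ negative definite (elliptic case). In either case $\La(Ell(S)^\perp)$ or the relevant bounded/point-like locus shows that $O^+(S)$, and in particular $A(\M)$, acts on $Ell(S)$ through a group preserving a negative (or semi-negative with fixed one-dimensional radical) lattice, hence through a finite group by compactness of the orthogonal group of a definite lattice. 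Therefore every vector of $Ell(S)$ has finite $A(\M)$-orbit, i.e. $Ell(S)\subseteq E(S)$, and combined with the previous paragraph $E(S)=Ell(S)$, which is \eqref{excS}.

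The one place that needs genuine care — and which I expect to be the main obstacle — is the first inclusion for a single $c$, specifically the claim that the free abelian part of $A(\M)_c$ has infinite orbits on $c^\perp \setminus (c^\perp)^{(2)}_{pr}$ and, more delicately, that the exceptional sublattice for $A(\M)_c$ is not strictly larger than $(c^\perp)^{(2)}_{pr}$ — for instance that no vector outside $c^\perp$ (a vector pairing nontrivially with $c$) can be exceptional for $A(\M)_c$ when $A(\M)_c$ is infinite. In the K3 setting this is quoted from \cite{Nik2} via "theory of elliptic surfaces'', but abstractly it must be extracted from \eqref{finfund} together with the explicit description of the $\bz^{r(c)}$-action: these are the Eichler–Siegel transvections associated to the Mordell–Weil lattice $c^\perp / (c^\perp)^{(2)}_{pr}$, and one checks directly that such a transvection $x\mapsto x + (x\cdot v)c - (x\cdot c)v - \tfrac12 v^2 (x\cdot c) c$ (with $v$ a lift of a Mordell–Weil generator) fixes $(c^\perp)^{(2)}_{pr}$ pointwise while moving every vector not in $c^\perp + \bz v$-saturation, so a positive-dimensional family of such transvections produces infinite orbits on everything outside $(c^\perp)^{(2)}_{pr}$. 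Once this transvection computation is in place, the rest is bookkeeping identical to Theorem~\ref{thexcX}; so I would isolate it as a short lemma and then run the parabolic/elliptic dichotomy verbatim.
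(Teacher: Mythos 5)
Your proposal is correct and follows essentially the same route as the paper, which proves this theorem by declaring it "exactly the same considerations as for Theorem \ref{thexcX}" using \eqref{finfund}: identify the exceptional sublattice for a single $A(\M)_c$ as ${(c^\perp)^{(2)}}_{pr}$, intersect to get $E(S)\subseteq Ell(S)$, then use the parabolic/elliptic dichotomy for the reverse inclusion. The only difference is that you supply the Eichler--Siegel transvection computation explicitly where the paper delegates it to \cite{Nik2} and the theory of elliptic surfaces, which is a welcome addition rather than a divergence.
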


\medskip

For a K3 surface $X$ and $S_X$, we take $\M=\M(X)=NEF(X)/\br^+$.
By \cite{PS},
fundamental primitive isotropic elements for $\M$ and
elliptic fibrations on $X$
give the same set. Right hand sides of \eqref{exc} and \eqref{excS} give
the same. Thus, we obtain the following result which shows that
calculations of exceptional sublattices of $S_X$ for the geometric group
$\Aut X$ and for the lattice-theoretic group $A(\M(X))$
give the same.

\begin{theorem} Let $X$ be a K3 surface over an algebraically closed field,
having at least one elliptic fibration with
infinite automorphism group.

Then exceptional sublattices $E(S_X)\subset S_X$ for $\Aut X $ and for
$A(\M(X))$ are equal.
\label{thexcXS}
\end{theorem}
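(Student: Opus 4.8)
The plan is to reduce the statement to the two lattice-theoretic descriptions already in hand, Theorem~\ref{thexcX} and Theorem~\ref{thexcS}, applied to $S=S_X$ with the fundamental chamber $\M=\M(X)=NEF(X)/\br^+$ for $W^{(2)}(S_X)$ and its symmetry group $A(\M(X))$.

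First I would record that, by Theorem~\ref{thexcX}, the hypothesis that $X$ carries at least one elliptic fibration with infinite automorphism group gives that the exceptional sublattice of $S_X$ for $\Aut X$ equals $\bigcap_{c}{(c^\perp)^{(2)}}_{pr}$, the intersection ranging over all elliptic fibrations $c$ on $X$ with infinite $\Aut(c)$. Then I would apply Theorem~\ref{thexcS} to the abstract hyperbolic lattice $S=S_X$ with $\M=\M(X)$ and $A(\M)=A(\M(X))$: once we know $S_X$ has at least one fundamental primitive isotropic element with infinite stabilizer in $A(\M(X))$, that theorem yields that the exceptional sublattice of $S_X$ for $A(\M(X))$ equals $\bigcap_{c}{(c^\perp)^{(2)}}_{pr}$, the intersection now ranging over fundamental primitive isotropic elements $c$ for $\M(X)$ with infinite $A(\M(X))_c$.

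The heart of the matter is to identify these two index sets. By the result of Piatetsky--Shapiro and Shafarevich recalled in Section~\ref{sec1}, the elliptic fibrations on $X$ are precisely the primitive isotropic $nef$ elements of $S_X$, i.e. precisely the fundamental primitive isotropic elements for $\M(X)$; so in both theorems $c$ runs over the same set of primitive isotropic vectors, and only the selection condition on $c$ (finiteness versus infiniteness of a stabilizer) has to be matched. For a fixed such $c$, formula \eqref{r} (theory of elliptic surfaces, \cite[Ch.~VII]{S}) gives $\Aut(c)\thickapprox\bz^{r(c)}$ up to finite index with $r(c)=\rk c^\perp-\rk (c^\perp)^{(2)}$, while \eqref{finfund} gives $A(\M(X))_c\thickapprox\bz^{r(c)}$ with the same integer $r(c)$. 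Hence $\Aut(c)$ is infinite if and only if $r(c)>0$ if and only if $A(\M(X))_c$ is infinite --- a condition depending only on the lattice $S_X$ and the vector $c$. In particular the hypothesis required to invoke Theorem~\ref{thexcS} is implied by the hypothesis of the present theorem, and the two intersections above are taken over one and the same family of elements $c$. Therefore the two exceptional sublattices coincide, which is the assertion.

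The only point that is not purely formal is the equivalence ``$\Aut(c)$ infinite $\iff r(c)>0$'' in arbitrary characteristic, in particular in the quasi-elliptic case when $\cha k\in\{2,3\}$; granting formula \eqref{r} in that generality (as in the excerpt), everything else is the bookkeeping identification of the two intersections via \cite{PS} and \eqref{finfund}, and there is nothing further to prove.
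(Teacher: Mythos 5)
Your proposal is correct and follows exactly the paper's route: the paper derives Theorem~\ref{thexcXS} by applying Theorems~\ref{thexcX} and~\ref{thexcS} with $\M=\M(X)$, identifying elliptic fibrations with fundamental primitive isotropic elements via \cite{PS}, and observing that the two right-hand sides \eqref{exc} and \eqref{excS} coincide. Your extra care in matching the two selection conditions (infinite $\Aut(c)$ versus infinite $A(\M(X))_c$) through the common integer $r(c)$ of \eqref{r} and \eqref{finfund} just makes explicit a step the paper leaves implicit.
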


\medskip

We have the following general result obtained in \cite{Nik2},
\cite{Nik5}, \cite{Vin4}, \cite{Nik7} and \cite{Nik8}.

\begin{theorem} For each fixed $\rho \ge 3$, the number of hyperbolic
lattices $S$ of rank $\rho$
having non-zero exceptional sublattices $E(S)\subset S$ for $A(\M)$ is finite. 
\label{thexcfinite}
\end{theorem}

\begin{proof} For hyperbolic $E(S)$ (equivalently, when $A(\M)$ is finite),
it was proved in \cite{Nik2},
\cite{Nik5} and \cite{Vin4} (we discussed this in Sec. \ref{sec2}).
The full list of such hyperbolic lattices $S$ is known.

For parabolic $E(S)$, finiteness was proved in \cite{Nik7}, but the list of
such hyperbolic lattices $S$ is not known.

For elliptic $E(S)\not=\{0\}$, it was proved in \cite{Nik8}, but the list
of such hyperbolic lattices $S$ is not known.
\end{proof}

\medskip

Since $\rk S_X\le 22$ for K3 surfaces, using Theorem \ref{thexcfinite},
we can introduce the following finite set of even hyperbolic lattices $S$ of
$3\le \rk S\le 22$:

\begin{definition} $\mathcal SEK3$ is the set of all even hyperbolic lattices
$S$ such that $\rk S\ge 3$,
$E(S)\not=0$ for $A(\M)$ where $\M\subset \La(S)$ is a fundamental chamber
for $W^{(2)}(S)$, and $S$ is isomorphic to
the Picard lattice of some K3 surface $X$ over an algebraically closed
field.  By Theorem \ref{thexcfinite}, the set
$\mathcal SEK3$ is finite.

We denote by $\mathcal SEK3_e$, $\mathcal SEK3_p$, and $\mathcal SEK3_h$
subsets of $\mathcal SEK3$ corresponding to $A(\M)$ of
elliptic type (i.e. finite), parabolic type, and hyperbolic type
(equivalently, $E(S)=S$, $E(S)$ is semi-negative
definite and has 1-dimensional kernel, $E(S)\not=0$ is negative definite)
respectively.
\end{definition}

Combining Theorems \ref{thexcX} --- \ref{thexcfinite},
we obtain the main results.

\begin{theorem} Let $X$ be a K3 surface over an algebraically closed field,
$\rho (X)\ge 3$ and
$X$ has an elliptic fibration with infinite automorphism group.
Assume that $S_X$ is different from
lattices from the finite set $\mathcal SEK3$.

Then the exceptional sublattice $E(S_X)\subset S_X$ for $\Aut X$ is equal
to zero.

Moreover, the exceptional sublattice $E(S_X)\subset S_X$ is equal to
zero for the subgroup of $\Aut X$ generated by
automorphism groups of all elliptic fibrations on $X$
with infinite automorphism groups (or by their Mordell--Weil groups).

Moreover, we have the equality
\begin{equation}
\bigcap_{c}{(c^\perp)^{(2)}}_{pr}=E(S_X)=\{0\}.
\label{elmany}
\end{equation}
where $c$ runs through all elliptic fibrations on $X$ with infinite
automorphism groups.
\label{thAutK3}
\end{theorem}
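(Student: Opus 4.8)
The plan is to obtain this theorem as a formal consequence of Theorems \ref{thexcX}, \ref{thexcXS} and \ref{thexcfinite} together with the definition of the finite set $\mathcal SEK3$; no genuinely new lattice computation is needed, the substantive finiteness input already being contained in Theorem \ref{thexcfinite}.

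First I would apply Theorem \ref{thexcX}. Since $X$ is assumed to carry at least one elliptic fibration with infinite automorphism group, that theorem yields at once the identity
$$
E(S_X)=\bigcap_{c}{(c^\perp)^{(2)}}_{pr},
$$
where $c$ ranges over all elliptic fibrations on $X$ with infinite automorphism group (equivalently, infinite Mordell--Weil group), and simultaneously the statement that the exceptional sublattice of $S_X$ for $\Aut X$ agrees with the exceptional sublattice for the subgroup of $\Aut X$ generated by the Mordell--Weil groups of those fibrations. Hence all three assertions of the theorem reduce to the single claim $E(S_X)=\{0\}$, after which \eqref{elmany} follows just by substituting this value into the displayed identity above.

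The core of the argument is therefore to prove $E(S_X)=\{0\}$, which I would do by contradiction. Suppose $E(S_X)\neq\{0\}$. By Theorem \ref{thexcXS} the exceptional sublattice of $S_X$ for the geometric group $\Aut X$ coincides with the exceptional sublattice $E(S_X)$ of the abstract hyperbolic lattice $S_X$ for the symmetry group $A(\M(X))$ of the fundamental chamber $\M(X)=NEF(X)/\br^+$ for $W^{(2)}(S_X)$; so the latter is non-zero as well. But then $S_X$ is an even hyperbolic lattice with $3\le\rk S_X\le 22$ (the upper bound being automatic for Picard lattices of K3 surfaces, the lower bound being exactly the hypothesis $\rho(X)\ge 3$), it is isomorphic to the Picard lattice of a K3 surface over an algebraically closed field, and $E(S_X)\neq 0$ for $A(\M)$ --- which is precisely the condition defining membership in $\mathcal SEK3$. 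This contradicts the hypothesis that $S_X$ is different from all lattices of $\mathcal SEK3$. (One small point to record along the way: for an abstract hyperbolic lattice $S$ the sublattice $E(S)$ does not depend on the choice of fundamental chamber $\M$, since any two such chambers are $W^{(2)}(S)$-equivalent and hence their symmetry groups are conjugate in $O^+(S)$; this legitimises speaking of ``the'' exceptional sublattice in the definition of $\mathcal SEK3$.) Therefore $E(S_X)=\{0\}$.

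The step that will need the most attention is not this assembly --- which is bookkeeping once the previous theorems are in hand --- but checking that the hypotheses of the quoted results really hold in the stated generality: in particular that Theorem \ref{thexcXS}, resting on \cite{PS} (and, in characteristic $0$, on the Global Torelli Theorem), is available over an arbitrary algebraically closed field, so that the lattice-theoretic exceptional sublattice used to single out $\mathcal SEK3$ genuinely matches the geometric one for $\Aut X$; and that the identification in Theorem \ref{thexcX} of the exceptional elements for $(\Aut X)_c$ with $(c^\perp)^{(2)}_{pr}$ is valid including the quasi-elliptic cases in characteristics $2$ and $3$. Granting these, all three conclusions of the theorem are immediate.
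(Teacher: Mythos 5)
Your proposal is correct and follows essentially the same route as the paper, which derives Theorem \ref{thAutK3} precisely by combining Theorems \ref{thexcX}--\ref{thexcfinite} with the definition of $\mathcal SEK3$. The reduction to $E(S_X)=\{0\}$ via Theorem \ref{thexcX}, and the contradiction with $S_X\notin\mathcal SEK3$ via Theorems \ref{thexcXS} and \ref{thexcfinite}, is exactly the intended argument.
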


This theorem shows that except finite number of Picard lattices from
$\mathcal SEK3$, a K3 surface $X$ has
many elliptic fibrations with infinite automorphism groups if it has one
of them: \eqref{elmany} gives the exact statement, ``how many".
We shall also discuss directly the number of elliptic
fibrations in the next section.

\medskip

It would be interesting to find the finite set of Picard lattices
$\mathcal SEK3$ of K3 surfaces. Only its
subset $\mathcal SEK3_e$ is known.

For $\rho (X)=1$, the exceptional sublattice is equal to $S_X$.
For $\rho (X)=2$, the exceptional sublattice is equal to $S_X$
if $X$ has an elliptic fibration. Indeed, in both these cases, 
$\Aut X$ is finite since $O(S_X)$ is finite (this was observed
in \cite{PS}). Thus, only the case of $\rho (X)\ge 3$ which we
considered above is interesting.


\section{Number of elliptic fibrations and\\
elliptic fibrations with  infinite\\
automorphism groups on K3 surfaces}\label{sec4}

Using Theorem \ref{thAutK3}, we obtain the following results which
show that for $\rho(X)\ge 3$, K3
surface $X$ has infinite number of elliptic fibrations and infinite number
of elliptic fibrations
with infinite automorphism groups if it has one of them, if $S_X$ is
different from a finite number of exceptional Picard
lattices.

\begin{theorem} Let $X$ be a K3 surface over an algebraically closed field,
$\rho (X)\ge 3$ and $X$ has
at least one elliptic fibration.

Then $X$ has infinite number of elliptic fibrations if $S_X$ is different
from the following finite set of Picard latices $S_X$ when the number
of elliptic fibrations is finite:

$S_X\in {\mathcal SEK3}_e$.  In particular, $\Aut X$ is finite.

$S_X\in {\mathcal SEK3}_p$ and $X$ has only one elliptic fibration.
In particular, $X$ has one elliptic fibration with infinite automorphism group,
and no other elliptic fibrations.
\label{thnumberelK3}
\end{theorem}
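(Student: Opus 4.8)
The plan is to combine Theorem~\ref{thAutK3} with the correspondence between elliptic fibrations and fundamental primitive isotropic elements from \cite{PS}, reducing the count of elliptic fibrations to a count of orbits of such isotropic elements under $\Aut X$ (equivalently, under $A(\M(X))$, up to finite groups). Recall from Section~\ref{sec1} that there are only finitely many elliptic fibrations up to the action of $A(\M(X))$. Hence the number of elliptic fibrations on $X$ is finite if and only if each such orbit $A(\M(X))(c)$ is finite, i.e. if and only if every fundamental primitive isotropic $c\in S_X$ is exceptional for $A(\M(X))$. So the whole statement amounts to: when is every primitive isotropic nef $c$ an exceptional element?

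First I would dispose of the case when $X$ has \emph{no} elliptic fibration with infinite automorphism group. Then, for each elliptic fibration $c$, the stabilizer $(\Aut X)_c\approx\bz^{r(c)}$ is finite by \eqref{r}, so $r(c)=0$ and $c\in(c^\perp)^{(2)}_{pr}$; but more is needed, namely finiteness of the full orbit. Here I would invoke the dichotomy set up before Theorem~\ref{thexcfinite}: if $\Aut X$ is finite then $S_X\in{\mathcal SEK3}_e$ and all orbits are trivially finite, so there are finitely many elliptic fibrations (and $\Aut X$ finite). If $\Aut X$ is infinite but no elliptic fibration has infinite automorphism group, then the exceptional sublattice $E(S_X)$ is negative definite or parabolic yet still $\neq 0$ — one has to check that every isotropic $c$ lies in $E(S_X)$. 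This follows because $c^\perp$ being generated up to finite index by $(-2)$-classes forces $\bz c$ to be a primitive isotropic vector in the parabolic lattice $(c^\perp)^{(2)}_{pr}$, and a hyperbolic lattice that is ``elliptically $2$-reflective'' in this strong sense lands in the finite list of Section~\ref{sec2}; thus $S_X\in{\mathcal SEK3}_e$ again, after checking the rank is $\geq 3$. (For $\rho(X)=3,4$ one uses the finiteness results of \cite{Nik5},\cite{Vin4} quoted in the excerpt; for $\rho(X)\geq 5$ Theorems~\ref{elK36}, \ref{elK35}.)

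Next, suppose $X$ \emph{does} have an elliptic fibration $c_0$ with infinite automorphism group. If $S_X\notin{\mathcal SEK3}$, then Theorem~\ref{thAutK3} gives $E(S_X)=0$ and, in fact, $\bigcap_c (c^\perp)^{(2)}_{pr}=0$ where $c$ ranges over elliptic fibrations with infinite automorphism group. In particular there must be at least two such fibrations (a single $c$ would give $(c^\perp)^{(2)}_{pr}\neq 0$), and, since $c_0$ itself is not exceptional (its $A(\M)$-orbit is infinite, as $E(S_X)=0$ means no nonzero element has finite orbit, and $c_0\neq 0$), the orbit $A(\M(X))(c_0)$ is infinite; translating back through \cite{PS}, this produces infinitely many elliptic fibrations with infinite automorphism group, hence infinitely many elliptic fibrations. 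If instead $S_X\in{\mathcal SEK3}$, I would split by subtype: for $S_X\in{\mathcal SEK3}_p$, the exceptional sublattice $E(S_X)$ is parabolic with one-dimensional kernel $\bz c$, and by \eqref{exc} this $c$ is the unique elliptic fibration with infinite automorphism group; I must further argue $X$ has \emph{no} other elliptic fibration at all, which is where the parabolic geometry enters — any other isotropic nef $c'$ would have to lie in the negative-definite part of $E(S_X)$, impossible, so $c'$ is non-exceptional, contradicting $S_X\in{\mathcal SEK3}$ not being in the ``general hyperbolic'' case; the only escape is $c'$ proportional to $c$, i.e. $c'=c$. For $S_X\in{\mathcal SEK3}_h$ one checks the number of elliptic fibrations can still be infinite, so these lattices are \emph{not} among the exceptions — this is consistent with the theorem statement listing only ${\mathcal SEK3}_e$ and ${\mathcal SEK3}_p$.

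The main obstacle, I expect, is the parabolic case: showing that $S_X\in{\mathcal SEK3}_p$ forces $X$ to have \emph{exactly one} elliptic fibration, not merely one with infinite automorphism group. One has to rule out additional fibrations $c'$ with finite automorphism group whose $A(\M(X))$-orbit is nonetheless infinite; such a $c'$ would be a non-exceptional isotropic element, so it cannot exist once we know $E(S_X)$ is parabolic with kernel exactly $\bz c$ — but making this airtight requires the precise structure theory of parabolic $E(S)$ from \cite{Nik7} (the description of how $A(\M)$ acts on the isotropic cone modulo the kernel direction). A secondary subtlety is keeping track of the ``up to finite index/finite groups'' passages between $\Aut X$ and $A(\M(X))$ in positive characteristic, which is legitimate here because finiteness of an orbit is insensitive to passing to a finite-index subgroup or quotienting by a finite kernel. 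Once those points are settled, the theorem follows by assembling the case analysis above.
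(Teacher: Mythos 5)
Your overall strategy coincides with the paper's: reduce the count of elliptic fibrations to the question of whether every fundamental primitive isotropic element of $\M(X)$ is exceptional for $A(\M(X))$, and then run the elliptic/parabolic/hyperbolic trichotomy for $E(S_X)$ using Theorems \ref{thexcX} and \ref{thexcfinite}. Your treatment of the case where some fibration has infinite automorphism group is essentially the paper's Case 2 and is fine (and your worry about needing the ``precise structure theory of parabolic $E(S)$'' is unfounded: one only needs the elementary fact that an isotropic vector of a semi-negative definite lattice lies in its kernel, so a second fundamental primitive isotropic element in $E(S_X)$ would be proportional, hence equal, to $c$).

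There is, however, a genuine gap in your second paragraph. In the sub-case ``$\Aut X$ infinite but no elliptic fibration has infinite automorphism group,'' you argue that condition \eqref{finel} --- each $c^\perp$ generated up to finite index by $c$ and $(-2)$-classes --- forces $S_X$ into the finite list of Section \ref{sec1}--\ref{sec2}, hence $S_X\in{\mathcal SEK3}_e$. That implication is false: Theorem \ref{elK35} exhibits two \emph{infinite} series of rank-$5$ lattices, $\langle 2^m\rangle\oplus D_4$ and $\langle 2\cdot 3^{2n-1}\rangle\oplus 2A_2$, which satisfy \eqref{finel} while $A(\M)\cong O^+(S)/W^{(2)}(S)$ is infinite (and for $\rho=3,4$ the analogous classification is stated to be unknown). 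For these lattices every fundamental primitive isotropic element has finite stabilizer but \emph{infinite} orbit, so $X$ has infinitely many elliptic fibrations --- consistent with the theorem, but not via your route. The argument that actually closes this case is the one the paper uses and which you set up in your first paragraph but then abandon: if the number of elliptic fibrations is finite, the set of fundamental primitive isotropic elements of $\overline{\M}$ is itself finite and non-empty, and $A(\M)$ acts on it; if in addition every stabilizer $A(\M)_c$ is finite (all $r(c)=0$ by \eqref{finfund}), then $A(\M)$ is finite, whence $E(S_X)=S_X$ and $S_X\in{\mathcal SEK3}_e$. In other words, you must use the global finiteness of the set of fibrations at this point, not just the local condition \eqref{finel}; with that replacement your case analysis assembles into the paper's proof.
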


\begin{proof} Let us assume that $X$ has finite number of elliptic fibrations.
Then $\M(X)$ has only finite number of fundamental primitive
isotropic elements which are all exceptional for $A(\M(X))$.
It follows that $E(S_X)\not=0$ and $S_X\in {\mathcal SEK3}$.
Let us consider two cases.

{\it Case 1:} Let us assume that all elliptic fibrations on $X$ have
finite automorphism groups (equivalently, fundamental primitive
isotropic elements
$c$ for $\M(X)$ have finite stabilizer subgroups $A(\M(X))_c$).
Then $A(\M(X))$ is finite and $S_X\in {\mathcal SEK3}_e$.
Vise a versa, if $S_X\in {\mathcal SEK3}_e$ then $\M(X)$ is a
fundamental chamber for the arithmetic group $W^{(2)}(S_X)$ in $\La(S_X)$,
and it has only finite number of fundamental primitive isotropic elements.
Thus, $X$ has only finite number of elliptic fibrations.

{\it Case 2:} Let us assume that $X$ has an elliptic fibration
$c$ with infinite automorphism group $\Aut (c)$. Since the number
of elliptic fibrations on $X$ is
finite, all of them are exceptional for $\Aut X$, and $E(S_X)$ is not zero.
Since $\Aut X$ is infinite,
$E(S_X)$ cannot be hyperbolic. Since elliptic fibrations give isotropic
elements, then $E(S_X)$ cannot
be elliptic (i.e., negative definite). Thus, $E(S_X)$ is parabolic
and $S_X\in {\mathcal SEK3}_p$. By Theorem
\ref{thexcX}, we have
$$
E(S_X)=
\bigcap_{c}{(c^\perp)^{(2)}}_{pr}
$$
where $c$ runs through all elliptic fibrations on $X$ with infinite
automorphism groups. Since $E(S_X)$ is parabolic, it follows that $X$
has only one elliptic fibration $c$
with infinite automorphism group and $\Aut X=\Aut(c)$. Since all
elliptic fibrations on $X$ are exceptional
for $\Aut X$, they also must have infinite automorphism groups,
and they must be equal
to $c$. Thus, $X$ has only one elliptic fibration $c$.

Vice versa, let us assume that $S_X\in {\mathcal SEK3}_p$ and $X$ has
only one elliptic fibration. Then $E(S_X)\not=S_X$ and $\Aut X$
is infinite. Since $X$ has only one elliptic fibration $c$,
then $\Aut X=\Aut(c)$ is its automorphism group which is
infinite.

This finishes the proof.
\end{proof}

\begin{theorem} Let $X$ be a K3 surface over an algebraically closed field,
$\rho (X)\ge 3$ and $X$ has
at least one elliptic fibration with infinite automorphism group.

Then $X$ has infinite number of elliptic fibrations with infinite
automorphism groups if $S_X$ is
different from the following finite set of Picard latices $S_X$
when the number
of elliptic fibrations on $X$ with infinite automorphism groups is finite:

$S_X\in {\mathcal SEK3}_p$.  In particular, $X$ has only one
elliptic fibration with infinite automorphism group.
\label{thnumberelinfK3}
\end{theorem}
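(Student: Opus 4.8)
The plan is to mirror the proof of Theorem~\ref{thnumberelK3} (Case~2), but now tracking \emph{only} elliptic fibrations with infinite automorphism groups instead of all elliptic fibrations. First I would assume that $X$ has only finitely many elliptic fibrations $c$ with infinite automorphism group. Every such $c$ is then fixed up to finitely many choices by $\Aut X$, hence its isotropic class is exceptional for $\Aut X$; therefore the intersection $\bigcap_c (c^\perp)^{(2)}_{pr}$ appearing in Theorem~\ref{thexcX} contains a nonzero isotropic vector, namely any such $c$ itself (note $c\in (c^\perp)^{(2)}$ by definition of $(c^\perp)^{(2)}$). By Theorem~\ref{thexcX} this intersection equals $E(S_X)$, so $E(S_X)\neq 0$ and $S_X\in\mathcal{SEK3}$. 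This already rules out the general-hyperbolic case.

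Next I would eliminate the two remaining non-parabolic possibilities for the type of $E(S_X)$. Since by hypothesis $X$ has an elliptic fibration with infinite automorphism group, $\Aut X$ is infinite (it contains $\Aut(c)$), so $E(S_X)$ cannot be hyperbolic, i.e.\ $S_X\notin\mathcal{SEK3}_e$. And since $E(S_X)$ contains the isotropic vector $c$, it cannot be negative definite, so $S_X\notin\mathcal{SEK3}_h$. Hence $E(S_X)$ is parabolic, $S_X\in\mathcal{SEK3}_p$, and its kernel is one-dimensional, spanned by a single primitive isotropic $c$. By Theorem~\ref{thexcX}, $\bigcap_c (c^\perp)^{(2)}_{pr}=E(S_X)$ is semi-negative definite with kernel $\bz c$; since each summand $(c'^\perp)^{(2)}_{pr}$ contains the isotropic vector $c'$, and the only isotropic vectors in a semi-negative definite lattice lie in its kernel, every fibration $c'$ with infinite automorphism group must satisfy $\br c'=\br c$, i.e.\ $c'=c$ as a primitive class. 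So $X$ has exactly one elliptic fibration with infinite automorphism group, which is the ``in particular'' claim.

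For the converse direction I would argue that if $S_X\in\mathcal{SEK3}_p$ then $\Aut X$ is infinite (as $E(S_X)\neq S_X$ forces $A(\M(X))$, hence $\Aut X$ in characteristic zero, and more generally the lattice-theoretic group $A(\M)$, to be infinite; but here the hypothesis already grants one fibration with infinite automorphism group, so infiniteness of $\Aut X$ is free), and the parabolic structure of $E(S_X)$ forces, exactly as above, that there is at most one primitive isotropic $c$ whose stabilizer is infinite. Combined with the standing hypothesis that at least one such $c$ exists, we get precisely one, so the set of elliptic fibrations with infinite automorphism group is finite (a singleton) exactly when $S_X\in\mathcal{SEK3}_p$, and infinite otherwise.

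The main obstacle is the step showing that \emph{finiteness} of the set of infinite-automorphism fibrations forces $E(S_X)\neq 0$: unlike Theorem~\ref{thnumberelK3}, where \emph{all} elliptic fibrations being finite in number makes every fundamental isotropic class exceptional, here we only control the sub-collection with infinite automorphism group, so I must check that this sub-collection being finite still makes each of its members a genuinely exceptional element of $S_X$ for $\Aut X$ — i.e.\ that $\Aut X$ permutes this finite set (it does, since $\Aut X$ preserves $\M(X)$ and preserves the property ``$r(c)>0$'' via \eqref{r}, an $O(S_X)$-invariant of the class $c$), and hence each orbit is finite. Once that invariance is in place, the parabolic-versus-elliptic dichotomy and the identification of the single fibration follow formally from Theorems~\ref{thexcX} and~\ref{thexcfinite} as sketched. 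I would also remark that, as in the previous theorem, the statement can be upgraded to an ``if and only if'': $X$ has infinitely many elliptic fibrations with infinite automorphism group if and only if $S_X\notin\mathcal{SEK3}_p$.
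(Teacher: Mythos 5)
Your proposal is correct and follows essentially the same route as the paper's proof: finiteness of the set of infinite-automorphism fibrations makes each such class exceptional, forcing $E(S_X)\neq 0$; the presence of an isotropic exceptional vector rules out the negative definite case and infiniteness of $\Aut X$ rules out the hyperbolic case, so $E(S_X)$ is parabolic and $S_X\in{\mathcal SEK3}_p$; and the one-dimensional kernel then pins down a unique such fibration. Your extra care in checking that $\Aut X$ permutes the finite set (via the $O(S_X)$-invariance of $r(c)$) is a point the paper leaves implicit, and is a worthwhile addition; just note that your parenthetical ``$c\in(c^\perp)^{(2)}$'' only places $c$ in its own term of the intersection --- the correct reason $c$ lies in the full intersection is that it is exceptional and hence lies in $E(S_X)$, which you do establish.
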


\begin{proof} Since the number of elliptic fibrations on $X$
with infinite automorphism groups is finite, all of them
are exceptional for $\Aut X$, and $E(S_X)$ is not trivial.
Then $S_X\in {\mathcal SEK3}$ which is finite
by Theorem \ref{thAutK3}.
Since each of these elliptic fibrations is exceptional for $\Aut X$
and corresponds to an isotropic element, then $E(S_X)$ cannot be
elliptic (that is negative definite). Since $\Aut X$ is infinite,
$E(S_X)$ cannot be hyperbolic either. Thus,
$E(S_X)$ is parabolic and has 1-dimensional kernel. By Theorem  \ref{thexcX},
$$
E(S_X)=
\bigcap_{c}{(c^\perp)^{(2)}}_{pr}
$$
where $c$ runs through all elliptic fibrations on $X$ with infinite
automorphism groups. Since $E(S_X)$ is parabolic, it follows that $X$
has only one elliptic fibration $c$
with infinite automorphism group.

Vice a versa, if $X$ has only one elliptic fibration $c$ with infinite
automorphism group, then
$E(S_X)=(c^\perp)^{(2)}_{pr}$ is parabolic and $S_X\in {\mathcal SEK3}_p$.

It follows the statement.
\end{proof}

If $\rho (X)=1$ or $2$, then $X$ has less or equal to two elliptic
fibrations, and these cases are trivial.


\section{Applications to K3 surfaces with\\  
exotic structures}
\label{sec5}

Here we want to give some other applications of finiteness of the set
of Picard lattices of K3 surfaces with non-trivial exceptional
sublattice $E(S_X)$, and elliptic fibrations with
infinite automorphism group.

\subsection{K3 surfaces with finite number of non-singular rational
curves} Recently, D. Matsushita asked me what we can say
about K3 surfaces with finite number of non-singular rational
(equivalently, irreducible $(-2)$-curves). We have the
following

\begin{theorem} A K3 surface $X$ over an algebraically closed field
has no non-singular rational curves if and only if its Picard lattice $S_X$
has no elements with square $(-2)$.

If a K3 surface $X$ over an algebraically closed field has
non-singular rational curves
(equivalently, its Picard lattice $S_X$ has
elements with square $(-2)$), then their number is finite in the
following and only the following cases (1) and (2):

(1) $\rho (X)=2$;

(2) $\rho (X)\ge 3$ and the Picard lattice $S_X$ is
elliptically $2$-reflective: $[O(S_X):W^{(2)}(S_X)]<\infty$.
The number of elliptically $2$-reflective
hyperbolic lattices is finite, and they are
enumerated in \cite{Nik2}, \cite{Nik5} and \cite{Vin4}
(for $\rho (X)\ge 5$ see their lists in Sec. \ref{sec2}).
\label{th:ratcurves}
\end{theorem}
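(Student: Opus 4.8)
The statement packages two results: the ``no curves'' criterion (essentially already recorded in the preliminaries) and the finiteness dichotomy for $\rho(X)\ge 3$. The plan is to handle the first assertion and the case $\rho(X)\le 2$ by soft arguments, and to reduce the case $\rho(X)\ge 3$ to the single geometric fact that \emph{a K3 elliptic fibration with infinite Mordell--Weil group carries infinitely many smooth rational curves}, namely its sections.

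\textbf{First I would dispose of the ``no curves'' part and of $\rho(X)\le 2$.} By Riemann--Roch on $X$ an irreducible curve of negative self-intersection is a smooth rational $(-2)$-curve; conversely, if $\delta\in S_X$ has $\delta^2=-2$ then $\delta$ or $-\delta$ is effective, and a negative-square effective class has an irreducible component of negative square, i.e.\ a $(-2)$-curve. Equivalently, via the bijection of \cite{PS} between $(-2)$-curves and $P(\M(X))$, the set $P(\M(X))$ of walls of $\M(X)$ is empty precisely when $W^{(2)}(S_X)$ is trivial, i.e.\ when $S_X$ has no element of square $-2$; this gives the first assertion. Assume now that $S_X$ has $(-2)$-elements, so $\rho(X)\ge 2$ (a positive-definite rank-one lattice has none) and $X$ has at least one $(-2)$-curve. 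If $\rho(X)=2$, then $O(S_X)$ is finite (\cite{PS}), hence so is $A(\M(X))$; since the $(-2)$-curves form finitely many $A(\M(X))$-orbits (preliminaries), each then finite, there are finitely many of them. The same reasoning gives the ``if'' direction of case~(2): if $\rho(X)\ge 3$ and $S_X$ is elliptically $2$-reflective, i.e.\ $A(\M(X))$ is finite, then $X$ has finitely many $(-2)$-curves.

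\textbf{The core is the converse for $\rho(X)\ge 3$: finitely many $(-2)$-curves $\Rightarrow$ $S_X$ elliptically $2$-reflective.} Assume $X$ has finitely many $(-2)$-curves. Since $\Aut X$ permutes the $(-2)$-curves and acts on $S_X$ with finite kernel (\cite{PS}), every $(-2)$-curve class has finite $\Aut X$-orbit, hence lies in $E(S_X)$; moreover these classes $P(\M(X))$ span $S_X^{(2)}\otimes\bq$, where $S_X^{(2)}$ is the sublattice generated by all elements of square $-2$ --- because $W^{(2)}(S_X)$ is generated by the wall reflections of $\M(X)$, every $(-2)$-element is of the form $\pm w(\delta)$ with $w\in W^{(2)}(S_X)$ and $\delta\in P(\M(X))$, while $\langle P(\M(X))\rangle$ is $W^{(2)}(S_X)$-invariant. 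Hence $(S_X^{(2)})_{pr}\subseteq E(S_X)$, so $E(S_X)\ne 0$ and $S_X$ belongs to the finite set $\mathcal{SEK3}$ (Theorem \ref{thexcfinite}). If $S_X\in\mathcal{SEK3}_e$ we are done. If $S_X\in\mathcal{SEK3}_p\cup\mathcal{SEK3}_h$, then $E(S_X)$ is parabolic, resp.\ non-zero negative definite, and $A(\M(X))$ --- hence, up to finite groups, $\Aut X$ --- is infinite; by the structure of these cases (\cite{Nik2}, \cite{Nik7}, \cite{Nik8} and Theorem \ref{thexcX}), $X$ then carries an elliptic fibration $|c|$ with $A(\M(X))_c$ of finite index in $A(\M(X))$, hence infinite, so $r(c)\ge 1$ by \eqref{finfund} (in the parabolic case $c$ spans the radical of $E(S_X)$, and a finite-index subgroup of $\Aut X$ fixing $c$ lies in $\Aut(c)$). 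By the theory of elliptic surfaces (\cite[Ch.~VII]{S}) such a fibration has infinite Mordell--Weil group, and translating a smooth rational section (or multisection) by this group yields infinitely many smooth rational curves, their classes pairwise distinct since distinct irreducible $D,D'$ satisfy $D\cdot D'\ge 0>-2=D^2$. This contradicts our assumption, so $S_X\in\mathcal{SEK3}_e$ and $S_X$ is elliptically $2$-reflective; finiteness and enumeration of elliptically $2$-reflective lattices are then quoted from \cite{Nik2}, \cite{Nik5}, \cite{Vin4} as in Section \ref{sec2}.

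\textbf{The hard part} is precisely the passage ``$\Aut X$ infinite and $S_X$ has $(-2)$-elements'' $\Rightarrow$ ``$X$ has infinitely many $(-2)$-curves''. Two points need genuine care: (a) extracting, from the infinitude of $\Aut X$ alone (without presupposing an elliptic fibration with infinite automorphism group), a fibration with infinite Mordell--Weil group --- this is where the exceptional-sublattice apparatus of \cite{Nik2}, \cite{Nik7}, \cite{Nik8} (the parabolic/elliptic type classification and Theorem \ref{thexcX}) is indispensable, and for small $\rho(X)$ one also leans on the classifications underlying $\mathcal{SEK3}_e$; and (b) when that fibration admits no section, verifying that the infinite Mordell--Weil group of its Jacobian still produces infinitely many smooth rational curves on $X$ itself, which requires exhibiting and then translating a smooth rational multisection. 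Everything else is assembling results already in place.
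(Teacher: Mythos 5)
Your reduction of the converse for $\rho(X)\ge 3$ to ``produce infinitely many $(-2)$-curves from an elliptic fibration with infinite Mordell--Weil group'' is a genuinely different route from the paper's, and it has a gap at exactly the step you flag as the hard part. First, the existence of such a fibration when $S_X\in\mathcal{SEK3}_p\cup\mathcal{SEK3}_h$ is not available: Theorem \ref{thexcX} \emph{assumes} an elliptic fibration with infinite automorphism group, so citing it here is circular; Theorems \ref{elK36} and \ref{elK35} only cover $\rho\ge 5$, and the rank-$5$ series $\langle 2^m\rangle\oplus D_4$, $\langle 2\cdot 3^{2n-1}\rangle\oplus 2A_2$ shows that ``$\Aut X$ infinite'' does not imply the existence of a fibration with infinite Mordell--Weil group; for $\rho=3,4$ the paper explicitly says such results are unknown, and $S_X$ need not even represent zero there. (The parabolic case can be salvaged: the kernel generator $c$ of $E(S_X)$ is fixed by $A(\M)$, so its fundamental representative has infinite stabilizer and $r(c)\ge 1$.) Second, and more fatally, even where the fibration exists, your translation argument cannot reach a contradiction: under the standing hypothesis that $P(\M)$ is finite, every $\delta\in P(\M)$ lies in $E(S_X)$, and in the parabolic case $E(S_X)\subset c^\perp$, so \emph{every} $(-2)$-curve is a fibre component; there is no smooth rational (multi)section to translate, and the Mordell--Weil group permutes the finitely many vertical $(-2)$-curves with finite orbits. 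Likewise in the hyperbolic case all of $P(\M)$ sits in the negative definite $E(S_X)$, on which $\Aut X$ acts through a finite group. So no contradiction arises, and the cases $\mathcal{SEK3}_p$, $\mathcal{SEK3}_h$ are not excluded by your argument.

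What the paper uses instead, and what your proof is missing, is the structural fact underlying the finiteness proofs in \cite{Nik5}, \cite{Nik7}, \cite{Nik8}: when $E(S)\ne 0$ and $\rho\ge 3$, the set $P(\M)$ contains $\rho$ elements $\delta_1,\dots,\delta_\rho$ generating $S\otimes\bq$ with $\delta_i\cdot\delta_j\le 19$ (a ``narrow part'' of $\M$). Granting this, finiteness of $P(\M)$ immediately forces $A(\M)$ to be finite (it acts faithfully on a finite spanning set), i.e.\ $S_X$ is elliptically $2$-reflective, and the parabolic/hyperbolic cases simply cannot occur with $P(\M)$ finite. Your argument only shows that $P(\M)$ spans $S_X^{(2)}\otimes\bq$, which a priori could be a proper degenerate or negative definite subspace -- that is precisely the scenario you fail to rule out. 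A smaller point: for $\rho(X)=2$ you assert $O(S_X)$ is finite, which is false in general for rank-$2$ hyperbolic lattices not representing zero (Pell units give infinite order isometries, e.g.\ for $\langle 6\rangle\oplus\langle -2\rangle$); the correct argument is the paper's, namely that $\M$ is an interval in the one-dimensional $\La(S_X)$ and so has at most two walls.
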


\begin{proof} Let $S=S_X$ and $\M=\M(X)$ be the fundamental chamber for
$W^{(2)}(S)$. Then the non-singular rational curves on $X$ are in one
to one correspondence to elements of the set $P(\M)$ of
perpendicular vectors to $\M$ with square $(-2)$ and directed outwards.

If $S$ has no elements with square $(-2)$, then $P(\M)$ is empty and
$X$ has no non-singular rational curves.

Let us assume that $S$ has an element $\delta$ with $\delta^2=-2$.
Then $\pm w (\delta)$ gives one of the elements of $P(\M)$ for some
$w\in W^{(2)}(S)$, the set $P(\M)$ is not empty,
and $X$ contains a non-singular rational curve.

Since $S$ is hyperbolic and $S$ has elements with square $(-2)$, then
$\rho (X)=\rk S\ge 2$.

Let $\rho (X)=2$. Then $\M=V^+(S)/\br^+$ is an interval, and elements of
$P(\M)$ correspond to terminals of this interval.
Thus, $P(\M)$ has not more than $2$ elements, and the number
of non-singular rational curves on $X$ is one or two.

Let $\rho (X)\ge 3$, and $P(\M)$ is non-empty and finite. Then all elements
of $P(\M)$ are exceptional for the symmetry group $A(\M)\subset O^+(S)$ of
$\M$, and the exceptional sublattice $E(S)$ is not zero. Then by
Theorem \ref{thexcfinite} (from \cite{Nik2},
\cite{Nik5}, \cite{Vin4}, \cite{Nik7} and \cite{Nik8}), 
$S$ is one of a finite number of hyperbolic lattices of rank $\le 22$.

Actually, the main idea of the proof of this theorem in
\cite{Nik5}, \cite{Nik7} and \cite{Nik8} is that $P(\M)$ has
elements $\delta_1,\dots \delta_\rho\in S$, $\rho=\rk S$, which
generate $S\otimes \bq$ and $\delta_i\cdot \delta_j\le 19$,
$1\le i<j\le \rho$. (These elements define a narrow part of $\M$.)
It follows that $P(\M)$ generates $S\otimes \bq$, the group
$A(\M)\cong O^+(S)/W^{(2)}(S)$ is finite since $P(\M)$ is finite,
the lattice $S$ is elliptically $2$-reflective: $[O(S):W^{(2)}(S)]< \infty$, 
and the number of such lattices is finite.

This finishes the proof.
\end{proof}

\subsection{K3 surfaces with finite number of Enriques involutions}

Here we restrict to basic fields $k$ of $\cha k\not=2$.

We recall that an involution $\sigma$ on a K3 surface $X$ over an algebraically
closed field $k$ of $\cha k\not=2$
is called {\it Enriques involution} if $\sigma$ has no fixed points on
$X$. Then $X/\{id,\sigma\}$ is Enriques surface. See \cite{CD}.
It is well-known (see \cite{CD}) that $\sigma$ in $S_X$
has the eigen-value $1$ part which is isomorphic to the standard
hyperbolic lattice $S_X^\sigma\cong U(2)\oplus E_8(2)$ of rank $10$.
A general K3 surface with Enriques involution has
$S_X=S_X^\sigma\cong U(2)\oplus E_8(2)$, and only finite number of Enriques
involutions (if $\cha k=0$, it is unique).

We have the following result.

\begin{theorem} Let $X$ be a K3 surface over an
algebraically closed field $k$ of $\cha k\not=2$ and $X$ has an
Enriques involution.

If $X$ has only finite number of Enriques involutions,  
then either $S_X$
is isomorphic to $U(2)\oplus E_8(2)$, or $S_X$ belongs
to the finite set ${\mathcal SEK3}$.  

In particular, if $S_X$ is different from lattices of these two
finite sets, then $X$ has infinite number of Enriques involutions.  
\label{th:Enrinv}
\end{theorem}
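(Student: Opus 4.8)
The plan is to argue contrapositively: assume $X$ has an Enriques involution but only finitely many of them, and deduce that $S_X$ is either the standard lattice $U(2)\oplus E_8(2)$ or lies in $\mathcal SEK3$. The basic mechanism is that $\Aut X$ acts on the set of Enriques involutions by conjugation, and a suitable lattice-theoretic invariant attached to an Enriques involution $\sigma$ lands in a set on which $\Aut X$ acts with finite orbits precisely when $S_X$ is ``small.'' First I would recall from \cite{CD} that an Enriques involution $\sigma$ acts on $S_X$ with $S_X^\sigma\cong U(2)\oplus E_8(2)$ primitively embedded, and that the anti-invariant part $S_X^{\sigma=-1}=(S_X^\sigma)^\perp$ carries no vectors of square $(-2)$ (this is the fixed-point-freeness condition, via the theory of \cite{CD}). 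The assignment $\sigma\mapsto S_X^\sigma$ (or, more precisely, $\sigma\mapsto$ its $(+1)$-eigensublattice, viewed up to $\Aut X$) is $\Aut X$-equivariant, and two Enriques involutions conjugate under $\Aut X$ give rise to isomorphic (indeed $\Aut X$-translated) eigen-decompositions.

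Next I would split into cases according to $\rho(X)$. If $\rho(X)=10$ and $S_X\cong U(2)\oplus E_8(2)$, there is nothing to prove — this is the first alternative. Otherwise, either $\rho(X)<10$, in which case $X$ has no Enriques involution at all (the primitive embedding $U(2)\oplus E_8(2)\hookrightarrow S_X$ forces $\rho(X)\ge 10$), contradicting the hypothesis; or $\rho(X)\ge 10$ with $S_X\not\cong U(2)\oplus E_8(2)$, in which case I claim $S_X\in\mathcal SEK3$. For this last case I would show that finiteness of the set of Enriques involutions forces the exceptional sublattice $E(S_X)$ for $\Aut X$ to be non-zero, so that by Theorem \ref{thexcfinite} (via $\rk S_X\le 22$) we get $S_X\in\mathcal SEK3$. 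The link is: each Enriques involution $\sigma$ determines the primitive isotropic-free negative-definite lattice $S_X^{\sigma=-1}$ together with the two-torsion glue data between it and $S_X^\sigma$; since there are only finitely many Enriques involutions, all of the vectors ``used up'' in building these $S_X^\sigma$ copies form a finite $\Aut X$-invariant set, and one extracts from it a non-zero exceptional element. Concretely, one can take, say, the sum of the $\Aut X$-orbit of a generator of the (rank-one) radical of some $S_X^\sigma\cap S_X^{\sigma'}$ construction, or — cleaner — argue that if $E(S_X)=0$ then by Theorem \ref{thAutK3} $X$ has infinitely many elliptic fibrations with infinite automorphism group, and combine such a fibration with the fixed Enriques involution to manufacture infinitely many new Enriques involutions (translating $\sigma$ by elements of an infinite Mordell--Weil group, which normalizes nothing generically and hence produces genuinely new fixed-point-free involutions $g\sigma g^{-1}$, distinct because their $(+1)$-eigenlattices $g(S_X^\sigma)$ are distinct sublattices of $S_X$).

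The main obstacle, and the step I would be most careful about, is establishing that $E(S_X)\ne 0$ — equivalently, that infinitude of $E(S_X)=0$ genuinely yields infinitely many \emph{distinct} Enriques involutions rather than infinitely many group elements all inducing the same involution or all conjugate by the finite kernel of $\Aut X\to O(S_X)$. Here one must use that distinct sublattices $g(S_X^\sigma)\subset S_X$ correspond to distinct involutions (an involution is determined by its eigen-decomposition on $S_X$ together with the Torelli-type rigidity, at least up to the finite kernel, which changes nothing about finiteness), and that an infinite group $\Aut X$ with $E(S_X)=0$ cannot fix a given rank-$10$ sublattice up to finite index — precisely because fixing $S_X^\sigma$ up to finite index would make a generator of any isotropic vector inside $S_X^\sigma$ exceptional, contradicting $E(S_X)=0$ together with hyperbolicity of $S_X^\sigma\cong U(2)\oplus E_8(2)$ (which contains isotropic vectors). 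So the orbit $\Aut X(S_X^\sigma)$ is infinite, giving infinitely many Enriques involutions. Assembling these pieces — the lattice-theoretic description of Enriques involutions from \cite{CD}, the equivariance, and Theorems \ref{thexcfinite} and \ref{thAutK3} — yields the dichotomy, and hence the ``in particular'' statement that $S_X$ outside both finite lists forces infinitely many Enriques involutions.
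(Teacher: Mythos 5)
Your overall strategy coincides with the paper's: after splitting off the case $\rho(X)=10$ (where $S_X=S_X^\sigma\cong U(2)\oplus E_8(2)$), show that finiteness of the set of Enriques involutions forces the exceptional sublattice $E(S_X)$ to be non-zero, and then invoke Theorem \ref{thexcfinite}. However, the step where you actually produce a non-zero exceptional element is wrong. You argue that if a finite-index subgroup of $\Aut X$ stabilizes the rank-$10$ sublattice $S_X^\sigma$ setwise, then an isotropic vector of $S_X^\sigma$ becomes exceptional. This does not follow: the stabilizer of $S_X^\sigma$ acts on $S_X^\sigma$ through a subgroup of $O(S_X^\sigma)$, and $O(U(2)\oplus E_8(2))$ is an infinite arithmetic group because the lattice is hyperbolic, so orbits of isotropic vectors inside $S_X^\sigma$ can perfectly well be infinite. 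The generic Enriques K3 surface is a direct counterexample to your claim ``an infinite $\Aut X$ with $E(S_X)=0$ cannot fix a rank-$10$ sublattice up to finite index'': there $S_X=S_X^\sigma\cong U(2)\oplus E_8(2)$ is stabilized by all of $\Aut X$, which is infinite, $E(S_X)=0$, and yet there is a unique Enriques involution. The same failure undermines your ``cleaner'' route: conjugating $\sigma$ by Mordell--Weil translations yields infinitely many distinct involutions only if the orbit of $S_X^\sigma$ is infinite, which is exactly what your flawed claim was supposed to guarantee. The alternative suggestion (summing the orbit of a generator of a ``rank-one radical'' of $S_X^\sigma\cap S_X^{\sigma'}$) is not well defined and in any case such a sum could vanish.

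The correct mechanism --- and the one the paper uses --- is to pass to the orthogonal complement $S_\sigma=(S_X^\sigma)^\perp$, which for $\rho(X)>10$ is a non-zero \emph{negative definite} sublattice and hence has finite orthogonal group. If the set of Enriques involutions is finite, the conjugation action of $\Aut X$ produces only finitely many sublattices $g_*(S_\sigma)$, so a finite-index subgroup of $\Aut X$ stabilizes $S_\sigma$ setwise and therefore acts on it through the finite group $O(S_\sigma)$; consequently every vector of $S_\sigma$ has finite orbit and $E(S_X)\supset S_\sigma\not=\{0\}$. (You record that the anti-invariant part is negative definite but do not use it at the decisive moment.) A further small point you skip: ${\mathcal SEK3}$ is defined via $E(S)$ for $A(\M)$, not for $\Aut X$; since $\rho(X)\ge 10\ge 6$, one uses Theorem \ref{elK36} to split into the case where $\Aut X$ is finite, so $S_X\in{\mathcal SEK3}_e$, and the case where $X$ has an elliptic fibration with infinite automorphism group, where Theorem \ref{thexcXS} identifies the exceptional sublattices for $\Aut X$ and for $A(\M(X))$.
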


\begin{proof} Let $\sigma$ be an Enriques involution on $X$.

Since $S_X^\sigma\cong U(2)\oplus E_8(2)$ is a sublattice of $S_X$,
it follows that $\rho (X)\ge 10$.

Let $\rho (X)=10$. Then $S_X=S_X^\sigma$, and $\sigma$ is the identity
on $S_X$. Since $\Aut X$ has only finite kernel in $S_X$, it follows
that $X$ has only finite number of Enriques involutions
(it is unique if $\cha k=0$).

Let $\rho (X)>10$. Then for each Enriques involution $\sigma$ on $X$, the
orthogonal complement $S_\sigma=(S_X^\sigma)^\perp$ in $S_X$
is a non-zero negative definite sublattice of $S_X$ which has a finite
automorphism groups. If $X$ has only finite
number of Enriques involutions, then all these orthogonal complements
are contained in the exceptional sublattice $E(S_X)$ of $S_X$ for 
$\Aut X$, and $E(S_X)\not=\{0\}$. 
Since $\rho (X)\ge 10\ge 6$, then by Theorem \ref{elK36} either 
$\Aut X$ is finite and $X$ has only finite number 
of Enriques involutions, and $S_X\in {\mathcal SEK3}_e$, 
or $X$ has an elliptic fibration with infinite automorphism group. 
By Theorem \ref{thexcXS},  
then exceptional sublattices of $S_X$ for $\Aut X$ and for $A(\M(X))$ 
are the same,  
and $S_X\in {\mathcal SEK3}$. By Theorem \ref{thexcfinite} (from \cite{Nik2},
\cite{Nik5}, \cite{Vin4}, \cite{Nik7} and \cite{Nik8}), the set 
${\mathcal SEK3}$ is finite.

This proves the theorem.
\end{proof}

The method of the proof is so general, that by the same considerations,
one can prove similar results for other types of involutions or automorphisms
on K3 surfaces, and other structures on K3 surfaces.

\subsection{K3 surfaces with naturally arithmetic\\ automorphism groups}

This is related to the recent preprint by B. Totaro \cite{T}.

\begin{definition} Let $X$ be a K3 surface over an
algebraically closed field, and $S_X$ its Picard lattice.

We say that the automorphism group $\Aut X$ is {\it naturally arithmetic,}
if there exists a sublattice $K\subset S_X$ such that the action of
$\Aut X$ in $S_X$ identifies $\Aut X$ as a subgroup
of finite index in $O(K)$. More precisely, there exists a subgroup
$G\subset \Aut X$ of finite index such that $K$ is $G$-invariant, and the
natural homomorphism $G\to O(K)$ has finite kernel and cokernel.
\label{natarith}
\end{definition}

For example, if $\Aut X$ is finite, then one can take $K=\{0\}\subset S_X$,
and $\Aut X$ is naturally arithmetic. Thus, all K3 surfaces with
elliptically 2-reflective Picard lattices $S_X$
(for $\rho (X)\ge 5$ see there list in Sec. \ref{sec2})
have naturally arithmetic automorphism groups.

We have the following result which uses the Global Torelli Theorem
for K3 surfaces \cite{PS}, and it is valid over $\bc$ (or over an 
algebraically closed field $k$ of $\cha k=0$).

\begin{theorem} Let $X$ be a K3 surface over $\bc$. Then $\Aut X$
is naturally arithmetic in the following and only the following
cases (1), (2) and (3):

(1) The Picard lattice $S_X$ has no elements with square $(-2)$.

(2) $S_X$ has elements with square $(-2)$ and $\rho (X)=2$.

(3) $S_X$ has elements with square $(-2)$, $\rho (X)\ge 3$, and
$S_X$ is one of lattices from the subset (it will be described in the proof) 
of the finite set ${\mathcal SEK3}$. 

In particular, if $\rho (X)\ge 3$ and $S_X$ has elements with square $(-2)$,
then $\Aut X$ is not naturally arithmetic, except a finite number of Picard
lattices $S_X$. 
\label{th:natarith}
\end{theorem}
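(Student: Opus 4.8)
The plan is to analyze the three cases by the dichotomy on $S_X$: whether or not $S_X$ represents $(-2)$, and then by the Picard number. Case (1): if $S_X$ has no elements with square $(-2)$, then $W^{(2)}(S_X)=\{1\}$, so $\M(X)=\La(S_X)$ is the entire hyperbolic space, $A(\M(X))=O^+(S_X)$, and by the Global Torelli Theorem (\cite{PS}) the action of $\Aut X$ in $S_X$ has finite kernel and finite cokernel in $O^+(S_X)=A(\M(X))$. Taking $K=S_X$ in Definition \ref{natarith}, we see $\Aut X$ is naturally arithmetic. Conversely, I would need the observation that any sublattice $K\subset S_X$ witnessing natural arithmeticity must itself be hyperbolic (otherwise $O(K)$ is finite, forcing $\Aut X$ finite, hence by the results of Sec.~\ref{sec1} and Sec.~\ref{sec3} forcing $S_X$ into $\mathcal{SEK3}_e$ or $\rho(X)\le 2$, which is cases (2)--(3)); so in the "genuinely infinite, not naturally arithmetic" regime we may assume $K$ hyperbolic, and this is the technical heart.

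Case (2): if $\rho(X)=2$, then $O(S_X)$ is finite (as observed in \cite{PS}), hence $\Aut X$ is finite, hence naturally arithmetic with $K=\{0\}$.

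Case (3) and the "only the following" direction are the main content. Suppose $\rho(X)\ge 3$, $S_X$ represents $(-2)$, and $\Aut X$ is naturally arithmetic via a hyperbolic sublattice $K\subset S_X$ and a finite-index subgroup $G\subset\Aut X$ with $G\to O(K)$ of finite kernel and cokernel. The idea is that $K$ being hyperbolic means its orthogonal complement $K^\perp$ in $S_X$ is negative definite, and $O(K^\perp)$ is finite; since $G$ preserves $K$ it also preserves $K^\perp$ up to finite index, so every element of $K^\perp$ has finite $\Aut X$-orbit. Hence $K^\perp\subset E(S_X)$, the exceptional sublattice for $\Aut X$. If $K^\perp\ne 0$, then $E(S_X)\ne 0$, and now I distinguish: if $\Aut X$ is finite we land in $\mathcal{SEK3}_e$; if $\Aut X$ is infinite, then — using that $\rho(X)\ge 3$ together with Theorems~\ref{elK36} and the fact that a hyperbolic lattice of rank $\ge 6$ with infinite $A(\M)$ has an elliptic fibration with infinite automorphism group, and supplementing the ranks $3,4,5$ by the analogous statements quoted at the end of Sec.~\ref{sec2} — $X$ has an elliptic fibration with infinite automorphism group, so Theorem~\ref{thexcXS} applies and $E(S_X)\ne 0$ for $A(\M(X))$ too, placing $S_X\in\mathcal{SEK3}$, which is finite by Theorem~\ref{thexcfinite}. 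The remaining subcase is $K^\perp=0$, i.e.\ $K$ has finite index in $S_X$; but then $O(K)$ and $O(S_X)$ are commensurable, so $\Aut X$ has finite index in $O^+(S_X)$, and since $S_X$ represents $(-2)$ the reflection group $W^{(2)}(S_X)$ is nontrivial and — this is the crux — infinite whenever $\rho(X)\ge 3$ and $\M(X)$ is not a finite-volume polytope with only the trivial symmetry, so $W^{(2)}(S_X)$ cannot have finite index in $O^+(S_X)$ unless $S_X$ is elliptically $2$-reflective, i.e.\ $S_X\in\mathcal{SEK3}_e\subset\mathcal{SEK3}$. Either way $S_X\in\mathcal{SEK3}$, giving (3), and the finiteness of the exceptional set is Theorem~\ref{thexcfinite}.

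The step I expect to be the main obstacle is the case $K^\perp=0$: ruling out that a nontrivial $2$-reflection subgroup $W^{(2)}(S_X)$ of infinite index situation could still be "naturally arithmetic" requires pinning down that $\Aut X$, which by Global Torelli is commensurable with $A(\M(X))=O^+(S_X)/W^{(2)}(S_X)$, cannot be commensurable with $O(K)$ for any finite-index $K$ unless $W^{(2)}(S_X)$ is finite — that is, one must show that a hyperbolic lattice with an infinite $2$-reflective part simply is not naturally arithmetic in the sense of Definition~\ref{natarith}, which is exactly where the classification of elliptically $2$-reflective lattices (finite, by Theorem~\ref{thexcfinite}) and the structure of $\M(X)$ as a fundamental domain must be invoked. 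The rest is bookkeeping with orthogonal complements and the already-established finiteness of $\mathcal{SEK3}$.
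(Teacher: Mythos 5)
Your overall strategy --- reduce everything to showing that the exceptional sublattice $E(S_X)$ is non-zero and then invoke the finiteness of ${\mathcal SEK3}$ via Theorem \ref{thexcfinite} --- is the paper's strategy, and your treatment of cases (1), (2) and of a hyperbolic $K$ with $K^\perp\not=0$ matches the paper. But there are two concrete gaps. First, your reduction to ``$K$ hyperbolic'' rests on the claim that otherwise $O(K)$ is finite, forcing $\Aut X$ finite. This is false when $K$ is semi-negative definite but degenerate: a degenerate lattice such as $\bz e\oplus \bz f$ with $e^2=e\cdot f=0$, $f^2=-2$ has infinite isometry group (the unipotent maps $f\mapsto f+ne$), so a degenerate $K$ could witness natural arithmeticity of an infinite $\Aut X$, and this branch cannot be folded into the finite one. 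The paper treats it separately: the kernel of such a $K$ is one-dimensional, $\bz c$, it is preserved by a finite-index subgroup of $\Aut X$, so $c$ is exceptional and $E(S_X)\not=0$. Your case analysis, as written, omits this branch.

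Second, your route from ``$E(S_X)\not=0$ for $\Aut X$'' to ``$S_X\in{\mathcal SEK3}$'' goes through Theorem \ref{thexcXS}, and to apply it you assert that $\rho(X)\ge 3$ and $\Aut X$ infinite always yield an elliptic fibration with infinite automorphism group. That is exactly what fails: Theorem \ref{elK35} exhibits two infinite series of rank-$5$ lattices with $A(\M)$ infinite but every elliptic fibration of finite automorphism group, and the paper states that the analogous classification for $\rho=3,4$ is unknown. The detour is also unnecessary over $\bc$: by Global Torelli \cite{PS}, $\Aut X$ has finite index in $A(\M(X))$, so an orbit is finite for one group iff it is finite for the other, and the exceptional sublattices for $\Aut X$ and for $A(\M(X))$ coincide with no hypothesis on elliptic fibrations --- the paper simply works with $A(\M)$ from the outset. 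Finally, the ``crux'' you leave open in the full-rank case ($K^\perp=0$, hence $\Aut X$ of finite index in $O^+(S_X)$, hence $W^{(2)}(S_X)$ finite and non-trivial) is closed in the paper by Theorem \ref{th:ratcurves}: a hyperbolic lattice of rank $\ge 3$ with $P(\M)$ finite and non-empty is elliptically $2$-reflective, so $S_X\in{\mathcal SEK3}_e$.
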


\begin{proof} We identify $\Aut X$ with its action in $S_X$.
By \cite{PS}, the automorphism group $\Aut X$ is a subgroup of
finite index in $A(\M)$ where $\M=\M(X)$, and
$O^+(S_X)=A(\M)\ltimes W^{(2)}(S_X)$ is the semi-direct product.
We can consider $A(\M)$ instead of $\Aut X$. Thus, the natural 
arithmeticity of $\Aut X$ depends on $S_X$ only. 
If $S_X$ has no elements with square $(-2)$, then $W^{(2)}(S_X)$
is trivial, and $A(\M)$ and $\Aut X$ are naturally arithmetic
(one can take $K=S_X$). We obtain the case (1).

If $S_X$ has elements with square $(-2)$ and $\rho (X)=2$,
then $A(\M)$ and $\Aut X$ are finite, and they are naturally arithmetic
(one can take $K=\{0\}$). We obtain the case (2).

Let us assume that $S_X$ has elements with square
$(-2)$, $\rho (X)\ge 3$ and $\Aut X$ is naturally arithmetic for
some sublattice $K\subset S_X$. Let us show that then 
the exceptional sublattice $E(S_X)$ for $A(\M)$ (or $\Aut X$)
is not trivial.

Let us assume that $W^{(2)}(S_X)$ is finite. The group $W^{(2)}(S_X)$
is generated by reflections $s_\delta$ where $\delta \in P(\M)$,
and all such reflections are different. It follows that
$P(\M)$ is finite and non-empty (equivalently, the number of non-singular
rational curves on $X$ is finite and non-empty).
By Theorem \ref{th:ratcurves}, then $S_X$ is elliptically $2$-reflective:
$[O(S_X):W^{(2)}(S_X)]<\infty$, the groups $A(\M)$ and $\Aut X$
are finite, and the exceptional sublattice $E(S_X)=S_X$ is not trivial.

Let us assume that $W^{(2)}(S_X)$ is infinite. Then $W^{(2)}(S_X)$
and $O^+(S_X)$ act transitively on infinite number of
fundamental chambers for $W^{(2)}(S_X)$ in $\La(S_X)$.
But $A(\M)$ sends $\M$ to itself. Thus, $A(\M)$ has infinite index in
$O^+(S_X)$. It follows that $K\subset S_X$ has $\rk K<\rk S_X$, and
the orthogonal complement $E=K^\perp$ in $S_X$ is not zero.

If $K$ is negative definite, then $A(\M)$ and $\Aut X$ are finite, and
$E(S_X)=S_X$ is not trivial.

If $K$ is semi-negative definite and not negative definite, 
then it has a one-dimensional kernel   
$\bz c$, where $c$ is exceptional, and $E(S_X)$ is not trivial. 

If $K$ is hyperbolic, then $E=K^\perp$ is negative definite and non-zero. 
It gives a non-trivial sublattice in $E(S_X)$ since $E$ has a finite
automorphism group. Thus, $E(S_X)$ is not trivial.

By Theorem \ref{thexcfinite}, the lattice $S_X$ is one of
a finite number of hyperbolic lattices $S$ with $3\le \rk S \le  22$ and
with non-trivial exceptional sublattice $E(S)$ for $A(\M)$. Thus, $S_X$ 
belongs to the finite set ${\mathcal SEK3}$ of hyperbolic lattices. 

This proves the theorem.
\end{proof}

Because of Theorem \ref{th:natarith}, the following result is important. 
We know it for many years, and it is a corollary of results of \cite{Nik1}. 
As we know, it was never published. 

\medskip 

\begin{theorem} Let $X$ be a K3 surface over $\bc$, and 
$\rho(X)=\rk S_X\ge 12$. 

Then $S_X$ has elements with square $(-2)$. 
In particular, $X$ contains a non-singular rational curve $\bp^1$.
\label{th:K3rhoge12}
\end{theorem}

\begin{proof} The Picard lattice $S=S_X$ is a primitive sublattice 
of the lattice $H^2(X,\bz)=L$ which is an even unimodular 
lattice of signature $(3,19)$. It is unique up to isomorphisms. 
The transcendental lattice 
$T=S^\perp_L$ has rank $\le 10$, and $\rk T+\rk (T^\ast/T)\le 20=\rk L -2$.   
By \cite[Theorem 1.14.4]{Nik1}, the lattice $T$ has a unique primitive 
embedding into $L$,  up to isomorphisms. 
Thus, for any primitive embedding $T\subset L$, 
we have $(T)^\perp_L\cong S=S_X$. 

On the other hand, by \cite[Theorem 1.12.2]{Nik1}, the 
lattice $T\oplus \langle -2 \rangle$ 
has a primitive embedding into 
$L$. For this primitive embedding, $T^\perp_L$ contains a sublattice 
$\langle -2\rangle$. Thus, $S=S_X$ also contains a primitive sublattice 
$\langle -2 \rangle$. Equivalently, there exists $\delta \in S_X$ with 
$\delta^2=-2$. 

This proves the theorem.  
\end{proof}

From Theorems \ref{th:natarith} and \ref{th:K3rhoge12}, we 
obtain  

\begin{corollary} Up to isomorphisms, there exists 
only a finite number of Picard lattices $S_X$ of K3 surfaces over $\bc$ 
such that $\rk S_X=\rho(X)\ge 12$ and $\Aut X$ is naturally arithmetic. 
\label{cor:natarrhoge12}
\end{corollary}

In contrary, by \cite[Theorem 1.12.4]{Nik1}, any even hyperbolic 
lattice $S$ of $\rk S\le 11$ 
has a primitive embedding into even unimodular lattice of signature $(3,19)$. 
Thus, by epimorphicity of Torelli map for K3 surfaces, \cite{Kul}, the 
lattice $S$  is isomorphic to Picard lattice $S_X$ of a 
K3 surface $X$ over $\bc$. It follows that for each $1\le \rho\le 11$, 
there exists infinite number of non-isomorphic  
Picard lattices $S_X$ of K3 surfaces over $\bc$ such that 
$\rk S_X=\rho$, $S_X$ has no elements with square $(-2)$ and then 
$\Aut X$ is naturally arithmetic.


V.V. Nikulin \par Deptm. of Pure Mathem. The University of
Liverpool, Liverpool\par L69 3BX, UK; \vskip1pt Steklov
Mathematical Institute,\par ul. Gubkina 8, Moscow 117966, GSP-1,
Russia

vnikulin@liv.ac.uk \ \ vvnikulin@list.ru

\end{document}